\begin{document}
\title{\vspace{-1cm}Squarefree numbers in short intervals}
\author{Mayank Pandey\vspace{-1cm}}
\address{Department of Mathematics, Princeton University, Princeton, NJ 08540, USA}
\email{mayankpandey9973@gmail.com}

\maketitle


\begin{abstract}
  We show that there exists $\eta > 0$ such that the interval $[X, X + X^{\frac 15 - \eta}]$ contains
  a squarefree number for all large $X$. This improves on an earlier result of Filaseta and Trifonov
  who showed that there is a squarefree number in $[X, X + cX^{\frac 15}\log X]$ for some $c > 0$
  and all large $X$.

  We introduce a new technique to count lattice points near curves satisfying further restrictions,
  which we use to bound in critical ranges the number of integers in a short interval divisible by a large square.
  This uses as an input Green and Tao's quantitative version of Leibman's theorem on the
  equidistribution of polynomial orbits in nilmanifolds.
\end{abstract}
\renewcommand{\baselinestretch}{0.74}\normalsize
\tableofcontents
\renewcommand{\baselinestretch}{1.0}\normalsize

\newpage
\section{Introduction}

A number is said to be squarefree if it is not divisible by the square of a prime.
It is classical that the set of squarefree numbers has asymptotic density $\frac{6}{\pi^2}$,
and a basic question in analytic number theory, like with the primes, is to understand
the behavior of squarefree numbers in short intervals.
One of the most basic questions about this is how large gaps between squarefree numbers
can be at large scales.

Letting $q_n$ denote the $n$th squarefree number, one might conjecture with a basic probabilistic
model that for all $n$, we have $q_{n + 1} - q_n\ll \log n$.
Current bounds are very far from such a result, 
so we introduce now some notation for the subsequent discussion. Let
\[
  \theta^* := \inf\set{\theta : \limsup_{n\to\infty} \frac{q_{n + 1} - q_n}{n^\theta} < \infty}.
\]
The conjecture we stated implies that $\theta^* = 0$, though this too is wide open. We remark
that assuming the $abc$-conjecture, it was shown by Granville~\cite{MR1654759} that $\theta^* = 0$.

In this paper we show that $\theta^*\le 1/5 - \eta$ for some $\eta > 0$. In fact we are also
able to show an asymptotic for the number of squarefree numbers in a short interval
of such a short length in the following result:
\begin{theorem}\label{thm:main}
  There exists $\eta > 0$ such that for $X^{\frac 15 - \eta}\ll H\le X$, we have
  \[
    \sum_{X\le n\le X + H} \mu^2(n) = \frac{6}{\pi^2}H(1 + O(X^{-\eta})).
  \]
\end{theorem}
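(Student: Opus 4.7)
The natural starting point is the Möbius identity $\mu^2(n) = \sum_{d^2\mid n}\mu(d)$, which rewrites the left-hand side of the theorem as $\sum_{d}\mu(d)N_d$, where $N_d = \lfloor(X+H)/d^2\rfloor - \lfloor X/d^2\rfloor$. Truncating at $d\le D$ with $D$ a small fixed power of $X$, the series $\sum_{d\le D}\mu(d)/d^2$ produces the predicted main term $\frac{6}{\pi^2}H$ with acceptable rounding error, and the range $D< d\le\sqrt{H}$ contributes at most $\sum H/d^2\ll H/D$ via the trivial bound $N_d\le H/d^2 + 1$. The crux is the range $\sqrt{H}< d\le\sqrt{X+H}$, in which $N_d\in\{0,1\}$; one must show that the number $S$ of such $d$ with $N_d=1$ satisfies $S\ll HX^{-\eta}$.

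Dyadically decomposing this range into $d\in[D_1,2D_1]$, the count $N(D_1)$ of $d$ in the block with $N_d=1$ is the number of lattice points within vertical distance $H/D_1^2$ of the curve $y=X/x^2$. The probabilistic prediction $N(D_1)\ll 1 + H/D_1$ would sum dyadically to a quantity comfortably within budget. The method of Filaseta and Trifonov, based on divided differences and a Bombieri--Pila type argument, is sharp enough in most ranges of $D_1$ but just fails to reach the probabilistic prediction at the critical scale $D_1\sim X^{2/5}$; this is the source of the $X^{1/5}$ barrier and is the main obstacle.

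To treat this critical scale, I would subdivide $[D_1,2D_1]$ into shorter blocks and Taylor-expand $X/d^2$ around the basepoint of each block, producing a polynomial approximation of bounded degree (depending on $\eta$) with Taylor error comfortably below $H/D_1^2$. The question of how often $\|X/d^2\|\le H/D_1^2$ thereby reduces to a question about the distribution modulo $1$ of a polynomial sequence of bounded degree and controlled height. Applying Green and Tao's quantitative version of Leibman's theorem on equidistribution of polynomial orbits in nilmanifolds yields a dichotomy: either the polynomial sequence equidistributes modulo $1$, in which case $N(D_1)$ is at most the probabilistic prediction, or else its coefficients admit a small-denominator rational approximation, yielding strong diophantine information on $X$ relative to $D_1$ that can be used to rule out an abnormally large number of near-integer values. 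Balancing this dichotomy uniformly in $X$, $H$, and $D_1$ and summing across the dyadic decomposition delivers the desired saving, and hence Theorem~\ref{thm:main}.
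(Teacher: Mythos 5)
Your skeleton---M\"obius expansion, truncation, dyadic decomposition, reduction to counting $d\sim D$ with $\|X/d^2\|\le H/D^2$, known estimates away from a critical range, and Green--Tao at the critical range---matches the paper's at the top level, but the step where all the work lies does not hold up. Taylor-expanding $X/d^2$ on short blocks produces an ordinary polynomial of bounded degree with values in $\RR/\ZZ$, and for such abelian polynomial sequences the quantitative Leibman theorem is nothing more than a quantified Weyl criterion: it cannot give more than classical Weyl/van der Corput/exponent-pair estimates, and those are precisely the tools already known to fall short of $1/5$ (Graham--Kolesnik), let alone beat it. Moreover, your treatment of the non-equidistributed alternative is not viable as stated: the relevant coefficients are essentially $2X/d_0^3, 3X/d_0^4,\dots$ at the many block basepoints $d_0$, and for a fixed arbitrary $X$ there genuinely are many blocks on which these are well approximated by rationals with small denominators; such blocks cannot be ``ruled out'' by diophantine information on $X$---they must be counted, and counting them is exactly where the difficulty sits. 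In the paper, Green--Tao is not applied to a Taylor polynomial at all but to a genuinely $2$-step object: a bracket polynomial $g_3(r)+g_1(r)\{g_2(r)\}$, realized as a polynomial orbit on $\RR\times(\text{Heisenberg})$ (Propositions \ref{prop:floor_fn_est}, \ref{prop:bracket_poly}); in the non-equidistributed case a horizontal character converts the obstruction into a rational approximation condition on $g_1'(n_0),g_2'(n_0),g_3'(n_0)$ over basepoints $n_0$, which is then defeated by exponent pairs. None of this is reachable from a degree-bounded polynomial model on the circle.

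The reason a bracket polynomial appears---and this is the missing idea---is a preliminary elementary structure theory that your sketch does not have. The paper first runs Roth-type differencing on pairs $d, d+a\in\mc D$: $R_a(d)$ lies within $O(1/\Delta)$ of an integer $r$, yielding an approximate parametrization $d=d_a^*(r)\approx\tilde d_a(r)$ of the solutions by integers $r\asymp R$ (Proposition \ref{prop:approx_param}). At the critical scale, which is $D\asymp H^{3/2}\approx X^{3/10}$ (not $X^{2/5}$), the quantity $F_a(d_a^*(r))$ is known to accuracy $O(1)$ in terms of $\lfloor\tilde f_a(r)\rfloor$; since it lies within $1/(H\Delta^2)$ of an integer, that integer is pinned down up to $O(1)$ choices, and inverting $F_a$ then gives an approximation to $d_a^*(r)$ with error $o(1)$, so the integrality of $d_a^*(r)$ forces a bracket polynomial in $r$ to be abnormally often close to integers---this is the statement Green--Tao is ultimately used to refute. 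Separately, your claim that Filaseta--Trifonov suffices everywhere off the critical scale is too optimistic once $H=X^{1/5-\eta}$: the quoted Filaseta--Trifonov estimate (Proposition \ref{prop:very_low_est}) only reaches $D\ll X^{1/4}$ or so, and the window between $X^{1/4}$ and the critical scale, as well as the regime above it, requires the paper's new elementary estimates (\ref{eq:lower_regime_est}), (\ref{eq:upper_regime_easy}) built on the same parametrization and further differencing identities (\S\ref{sec:lower}); your proposal has no substitute for these.
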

This represents the first improvement on $\theta^*$ since\footnote{
  There is a preprint of Tsz Ho Chan~\cite{2021arXiv2110.09990} claiming to show that
  $\theta^*\le \frac{5}{26}$, though there is an error in this preprint according to its
  author (private communication).}
work of Filaseta and Trifonov~\cite{MR1171549} from 1992 in which it is shown that $\theta^*\le 1/5$.

We have left $\eta$ inexplicit, though in principle it should be possible to record a specific value.
This would require the exponents in our application of Green and Tao's quantitative Leibman
theorem \cite{GT} to be made explicit. We decided not to do so as it would significantly
lengthen the paper without producing qualitatively significant saving, while our main goal is
go past the natural threshold of $1/5$. We refer the reader to \S\ref{sec:15barrier_explain} for a discussion of
why $1/5$ is natural from the perspective of technique: it appears to be the best one can do with a single differencing identity.

The work of Filaseta--Trifonov followed work of Fogels~\cite{MR4843}, Roth~\cite{MR43119},
Richert~\cite{10.1112/jlms/s1-29.1.16}, Rankin~\cite{MR72170}, Schmidt~\cite{Sch}, Graham--Kolesnik~\cite{MR967330},
Trifonov~\cite{MR985889, MR1048051}, and Filaseta~\cite{MR1057318}. These works used both elementary methods
and exponential sums, with the $1/5$ bound of Filaseta--Trifonov~\cite{MR1171549} following from
purely elementary methods.

Our starting point is Filaseta and Trifonov's proof arranged in
such a way that the bound $\theta^*\le 1/5$ follows from a trivial bound for a certain
family of point counting questions. We deal with all ranges but one elementarily with
several new methods. For the critical range left, with further elementary manipulations,
we are reduced to counting lattice points satisfying certain constraints close to a curve.
It is in dealing with this range that we are led to require an understanding of the distribution
modulo $1$ of certain bracket polynomials, which we obtain from Green and Tao's work \cite{GT}.


The technique used to deal with this final critical range should lead to asymptotics for
the number of lattice points near curves of high dimension satisfying mild conditions.
We intend to make this precise in future work.

We end this section noting that there is a result of Bombieri and Bourgain
\cite{MR2505444} on ultraflat polynomials which uses
Filaseta and Trifonov's $1/5$ exponent at one point. Interestingly, this is one of the bottlenecks
to improving a certain exponent in their paper.
See the discussion at the end of~\cite[\S1]{MR2505444} for more on this.

\subsection{Notation and conventions}

We let $\norm{x} = \inf_{n\in\ZZ} |x - n|$. Also, we write $a\sim A$ to denote $A < a\le eA$.

We use Vinogradov notation $\ll, \gg, O(-), \asymp$ as usual. 
We make precise below the specific conventions regarding dependencies
between implied constants in hypotheses and statements. All implied constants are
absolute unless otherwise specified, and any parameters on which they might depend
will be specified with a subscript.

Implied constants in statements depend on implied constants in hypotheses.
For example, suppose we have a statement of the form
\[
  A_1\ll B_1,\dots,A_k\ll B_k\implies X_1\ll Y_1,\dots,X_\ell\ll Y_\ell.
\]
This should be read as meaning that for all $c_1,\dots, c_k > 0$, there exist
$c_1',\dots, c_\ell' > 0$ such that if $|A_1|\le c_1 B_1,\dots, |A_k|\le c_kB_k$,
then $|X_1|\le c_1' Y_1,\dots, |X_\ell|\le c_\ell' Y_\ell$.

We also introduce the variant $\lll$ of $\ll$, writing $A\lll B$ to denote
that $|A|\le cB$ for some sufficiently small $c > 0$. For example, the statement
$A\lll B\implies X\ll Y$ should be read as: for $c > 0$ sufficiently small, there exists
$c' > 0$ such that $|A|\le cB\implies |X|\le c'|Y|$.

In Table \ref{tab:cq5rbqwg57}, we have included a list of symbols used in the paper and their definitions
along with a little more information for ease of navigation of the paper.

\subsubsection{Archaic Greek characters}

At a couple points, we shall use the archaic Greek characters $\Sampi$ (Sampi) and \Qoppa (Qoppa).
Commands for these in \LaTeX~are as follows
(with documentclass \lstinline{amsart}; with \lstinline{article} they do not work):
\begin{verbatim}
\DeclareFontFamily{U}{cbgreek}{}
\DeclareFontShape{U}{cbgreek}{m}{n}{<10-12>grmn1000<12-17>grmn1200<17->grmn1728}{}
\DeclareRobustCommand{\Qoppa}{\text{\usefont{U}{cbgreek}{}{n}\symbol{21}}}
\DeclareRobustCommand{\Sampi}{\text{\usefont{U}{cbgreek}{}{n}\symbol{23}}}
\end{verbatim}
\newpage

\newpage
\section{Setup and outline of the proof}\label{sec:cq5rbolqen}

We take $\eta > 0$ fixed and sufficiently small throughout. We take $H\asymp X^{\frac{1}{5} - \eta}$ and write
$G = X/H^5 \asymp X^{5\eta}$ and also take $U = X^{\eta}$, though our statements hold so long as $U\ll G$.


\subsection{Reduction to squarefree sieve bounds in short intervals and
  proof of the main theorem}\label{subsec:reduction}

We succeed in obtaining the main theorem by improving bounds for the number of integers in $[X, X + H]$ that are divisible by squares of certain sizes.
Let $D_- = H/U, D_+ = 2X^{\frac 12}$. Then, 
\begin{multline}\label{eq:mob_exp}
  \sum_{X\le n\le X + H}\mu^2(n) 
  = \sum_{X\le n\le X + H}\sum_{d^2 | n} \mu(d) = \sum_{d\le D_+}\mu(d)\ssum{X\le n\le X + H\\ d^2 | n} 1
  \\ = \sum_{d\le D_-} \mu(d)\biggl(\frac{H}{d^2} + O(1)\biggr) + O\biggl(\sum_{D_-\le D \le D_+} \#\mc D_{[D, 2D]}\biggr)
  \\ = \frac{6}{\pi^2} H + O\biggl(D_- + \frac{H}{D_-} + \sum_{D_-\le D\le D_+}\#\mc D_{[D, 2D]}\biggr),
\end{multline}
where $D$ runs over integers of the form $2^k D_- $ with $k\ge 0 $ and we write
\begin{equation}\label{eq:cq5rbpe0xy}
  \mc D_S = \set{d\in S : \text{There exists } m\in \mathbb{Z}\text{ such that } md^2\in [X, X + H]}.
\end{equation}
Here, we have used the fact that for $d\gg\sqrt{H}$, at most $O(1)$-many $X\le n\le X + H$ can be divisible by $d^2$.
The $D_-, H/D_-$ terms in the error are admissible, so it remains to bound $\#\mc D_{[D, 2D]}$.

This is the content of the following two estimates, which may be of independent interest.
Before we state them, we consider one choice of $D_- \ll D\ll D_+$ and write $\Delta = D/H$.
Since the choice of $D$ remains stable, we write $\mc D = \mc D_{[D, 2D]}$.

\begin{proposition}\label{prop:all_new_bounds}

  If $\#\mc D\ge H/U$, we have the bounds
  \begin{equation}
    \label{eq:lower_regime_est} 
    \#\mc D\ll \frac{H}{U} + H
    \biggl( \frac{1}{\Delta^{\frac{1}{2}}}G^{9}U^{53}
    + \frac{\Delta^2}{H}G^{\frac{45}{2}}U^{\frac{243}{2}}
    + \frac{1}{\Delta}G^{19}U^{96} \biggr)\log H,
  \end{equation}
  \begin{equation}
    \label{eq:upper_regime_easy}
    \#{\cal D}\ll \frac{H}{U} + H\biggl(\frac{H}{\Delta^2}GU^2 + \frac{H^{\frac 12}}{\Delta}G^{\frac{3}{2}}
    U^{\frac{9}{2}} + \frac{1}{H^{\frac{1}{2}}}G^{\frac{1}{2}}U^{\frac{5}{2}}\biggr).
  \end{equation}
  Furthermore, there exists $\sigma > 0$ such that if $G, U, H/\Delta^2, \Delta^2/H\ll X^{10^{-5}}$, we have 
  \begin{equation}\label{eq:upper_regime_est}
    \#\mc D\ll \frac{H}{U} + H\biggl(1 + \frac{H}{\Delta^2}\biggr)X^{-\sigma}(GU)^{O(1)}.
  \end{equation}
\end{proposition}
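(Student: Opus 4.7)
The set $\mc D$ consists of $d \in [D, 2D]$ such that $f(d) := X/d^2$ lies within $H/d^2 \asymp H/D^2$ of an integer, so the proposition is really a statement about lattice points near the curve $y = X/x^2$. With $\Delta = D/H$ one has $f^{(k)}(d) \asymp X/D^{k+2}$, and the critical regime isolated in \eqref{eq:upper_regime_est} is $\Delta^2 \asymp H$, i.e., $D \asymp H^{3/2}$, which is precisely where the elementary methods saturate at the trivial $\#\mc D \ll H(GU)^{O(1)}$.

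For the two elementary bounds \eqref{eq:upper_regime_easy} and \eqref{eq:lower_regime_est}, my plan is to run Filaseta--Trifonov-style divided-difference arguments. For pairs (respectively triples) $d_i \in \mc D$, the differences $m_{d_1}d_1^2 - m_{d_2}d_2^2$ (respectively their second divided differences) are small integers, and using this plus the non-degeneracy of $f'$ and $f''$ one extracts algebraic constraints whose solution count gives the required bounds: a square-root-type saving for \eqref{eq:upper_regime_easy} in the large-$\Delta$ regime, and a cube-root-type saving for \eqref{eq:lower_regime_est} in the small-$\Delta$ regime coming from $f'' \asymp X/D^4$. The hypothesis $\#\mc D \ge H/U$ allows one to discard sparse configurations at cost absorbed into $(GU)^{O(1)}$.

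The heart of the proposition is \eqref{eq:upper_regime_est}, valid only in the critical regime $\Delta^2 \asymp H$. Fixing a central point $D_0 \asymp D$ and Taylor expanding, the condition $d \in \mc D$ becomes $\|c_1 t + c_2 t^2 + c_3 t^3 + \cdots\| \le H/D^2 + o(H/D^2)$ with $t = d - D_0$ and $c_k \asymp X/D^{k+2}$. The plan is: truncate the Taylor expansion at bounded degree (the tail being negligible); perform elementary manipulations to convert the count of $t$ with $\|P(t)\| \le \delta$ into a count involving the distribution modulo $1$ of a bracket polynomial in $t$; recognize the latter as a polynomial-orbit sum on a bounded-dimensional, bounded-step nilmanifold $G/\Gamma$; and invoke Green and Tao's quantitative Leibman theorem \cite{GT}. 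This produces a dichotomy: either the orbit is equidistributed on $G/\Gamma$ up to error $X^{-\sigma'}$, which yields \eqref{eq:upper_regime_est} immediately; or it factors through a horizontal sub-torus, forcing simultaneous rational approximations to $c_1, c_2, c_3, \ldots$ of denominator bounded by a small power of $X$, contradicting the hypothesis $G \ll X^{10^{-5}}$ via the explicit $X$-dependence of the $c_k$.

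The principal difficulty is engineering the reduction to a bracket polynomial so that the resulting nilmanifold $G/\Gamma$ has Green--Tao complexity polynomially bounded in the parameters, and checking that the structured alternative in the Leibman dichotomy is genuinely ruled out by the smallness of $G$ and $U$ under the stated hypothesis. Once these steps are in place, the power saving $X^{-\sigma}$ is an immediate consequence of the quantitative equidistribution input.
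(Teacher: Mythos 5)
There is a genuine gap on both halves of your plan. For \eqref{eq:lower_regime_est} and \eqref{eq:upper_regime_easy}, appealing to Filaseta--Trifonov-style divided differences does not suffice: in the ranges where these bounds are needed (e.g.\ $X^{1/100}\ll\Delta\ll H^{1/2}$), the classical pair/triple divided-difference arguments only recover $\#\mathcal{D}\ll H(GU)^{O(1)}$, i.e.\ exactly the $1/5$ barrier, and the sharper elementary bounds of Filaseta--Trifonov type (Proposition \ref{prop:very_low_est}) are valid only for $D\lll (X/H)^{1/3}$, well short of $\Delta\asymp H^{1/2}$. The paper's proof of these two estimates is genuinely new: it first builds the approximate parametrization of $\mathcal{D}_a$ by $\mathcal{R}_a$ via Roth's differencing $R_a$ (Proposition \ref{prop:approx_param}), then for \eqref{eq:lower_regime_est} detects approximate three-term linear patterns in $\mathcal{D}_a$ and constrains their defect $v$ through the new identities built from $\hat S_{a,b}$ and $F_{a,b}$, closing with the counting lemmas of \S\ref{sec:minor_lemmas}; for \eqref{eq:upper_regime_easy} it applies the derivative test (Proposition \ref{prop:poisson_trivial}) in the $a$-aspect to $\tilde f_a(r)=F_a(\tilde d_a(r))$, which is only possible after the differencing has lowered the effective height. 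Your sketch replaces all of this with the bound it is designed to beat.

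For \eqref{eq:upper_regime_est}, the proposed route (Taylor expand $X/d^2$ about $D_0$, truncate, and feed the condition $\|c_1t+c_2t^2+c_3t^3\|\le H/D^2$ into Green--Tao) would fail. First, a truncated Taylor expansion is an ordinary real polynomial, so the question is abelian Weyl equidistribution --- no bracket polynomial and no nilmanifold arises this way; the bracket structure in the paper appears only after two layers of integrality constraints: $R_a(d)$ near an integer pins $r$, $F_a(d_a^*(r))$ near an integer pins $f_a^*(r)=\lfloor\tilde f_a(r)\rfloor+j$ with $j\ll 1+H/A^2$, and inverting $F_a$ then expresses the integer $d_a^*(r)$ as $\breve d_a(\tilde f_a(r)+j)-\breve d_a'(\tilde f_a(r)+j)\{\tilde f_a(r)\}+o(1)$, which is the function handled by Proposition \ref{prop:floor_fn_est}. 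Second, the scales do not work in your direct version: at the critical point $\delta\asymp H/D^2\asymp X^{-2/5}$ while the intervals have length at most $D\asymp X^{3/10}$, so no polynomial equidistribution statement can detect events at scale $\delta$; the paper only needs equidistribution at scale $R^{-\sigma}$ precisely because the differencing has replaced $\delta$ by an $o(1)$ error in an integrality test. Third, the structured alternative cannot be ``ruled out by $G\ll X^{10^{-5}}$'': the coefficients vary with the short-interval center, and nothing prevents them from being well approximable for many centers; in the paper the structured case yields a relation $\|q_1g_1'(n_0)+q_2g_2'(n_0)+q_3g_3'(n_0)\|$ small, and the set of such centers $n_0$ is counted (and shown small) via an exponent-pair estimate, not excluded outright. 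Without the differencing architecture and this treatment of the structured case, the power saving $X^{-\sigma}$ does not follow.
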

We shall also require an estimate for $\Delta$ near $H$.
This will be the following consequence of a general result of Filaseta--Trifonov~\cite{MR1397690}
from an iteration of an elementary differencing procedure.
\begin{proposition}\label{prop:very_low_est}
  If $D\lll (X/H)^{\frac 13}$, then we have that
  \[
    \#\mc D\ll X^{\frac{1}{10}}D^{\frac 25} + X^{\frac{3}{20}}D^{\frac 18}.
  \]
\end{proposition}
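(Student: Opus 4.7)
The plan is to recast $\#\mc D$ as a count of lattice points near the curve $y=X/x^2$ and then invoke the general Filaseta--Trifonov estimate from \cite{FT2}, which is obtained through iterated elementary differencing.

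Note first that $d\in\mc D$ if and only if the interval $[X/d^2,(X+H)/d^2]$ of length $\asymp H/D^2$ contains an integer, equivalently $\norm{X/d^2}\le H/D^2$. Setting $f(x)=X/x^2$, the derivatives satisfy $f^{(k)}(x)=(-1)^k(k+1)!\,X/x^{k+2}$, so $|f^{(k)}(x)|\asymp X/D^{k+2}$ on $[D,2D]$. The hypothesis $D\lll(X/H)^{1/3}$ is equivalent to $\delta:=H/D^2\lll X/D^5\asymp |f^{(3)}(x)|$, placing us in the regime in which iterated differencing of order at least three is nontrivial. In particular $f$ is smooth and sign-definite in every derivative on $[D,2D]$, so the standard regularity hypotheses of any version of the Filaseta--Trifonov theorem are satisfied.

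The underlying Filaseta--Trifonov procedure takes $r$-tuples $d_1<\cdots<d_r$ in $\mc D$ with prescribed spacings, together with integers $m_i$ satisfying $m_i d_i^2\in[X,X+H]$, and forms iterated divided differences of the quantities $m_i d_i^2$. These divided differences are simultaneously within $O(H)$ times a spacing factor of an integer and, via Taylor expansion, approximated by the corresponding derivative of $f$ of size $\asymp X/D^{k+2}$. Choosing the spacings to balance the two approximations reduces the count to solutions of a polynomial Diophantine constraint. Order-four differencing yields the Huxley--Sargos-type bound $D\cdot (X/D^6)^{1/10}=X^{1/10}D^{2/5}$, which produces the first term; the second term $X^{3/20}D^{1/8}$ arises as an auxiliary or boundary contribution from a neighboring differencing order in the same estimate of \cite{FT2}.

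The main obstacle will be identifying the precise case of the Filaseta--Trifonov estimate whose output matches $X^{1/10}D^{2/5}+X^{3/20}D^{1/8}$ and verifying that the hypothesis $D\lll(X/H)^{1/3}$ suffices to place $\delta$ in the admissible range; both reduce to bookkeeping with the explicit derivative sizes above. Once the correct statement is located, the proof is a direct application.
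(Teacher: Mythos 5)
Your approach is the same as the paper's: reduce to the fractional-part count $\#\set{d\sim D:\norm{X/d^2}\le H/D^2}$ and quote the general Filaseta--Trifonov estimate from \cite{FT2}, with the hypothesis check $H/D^2\lll X/D^5$ coming from $D\lll(X/H)^{\frac 13}$ exactly as in the paper. The only imprecision is in your accounting of the error terms: the paper applies a single case of \cite[Theorem 6]{FT2}, namely $r=4$ with $T=X/D^2$, $N=D$, $\delta=H/D^2$, and \emph{all} of the output comes from that one application. The first term $T^{\frac1{10}}N^{\frac35}=X^{\frac1{10}}D^{\frac25}$ is as you say, but the term $X^{\frac3{20}}D^{\frac18}$ is not a ``neighboring differencing order''; it is the third term $N(\delta TN^{1-r})^{\frac1{r^2-3r+4}}=D(HX/D^7)^{\frac18}=D^{\frac18}(HX)^{\frac18}$ of the same $r=4$ estimate, bounded by $X^{\frac3{20}}D^{\frac18}$ via the standing assumption $H\ll X^{\frac15}$. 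There is also a middle term $N\delta^{\frac13}=D^{\frac13}H^{\frac13}$ which you did not mention and which must be absorbed into $X^{\frac1{10}}D^{\frac25}$, again using $H\ll X^{\frac15}$. With those two pieces of bookkeeping supplied, your argument is the paper's proof.
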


The starting point of all estimates in Proposition \ref{prop:all_new_bounds}
is a better understanding of the structure of $\mc D$. Implicitly, as we shall discuss, this is related to the proof in Filaseta and Trifonov's proof in \cite{MR1171549} that $\theta^*\le 1/5$.
The trivial bound from this starting point recovers essentially their bound.

It has since been brought to our attention that there is an improvement of \cite{MR1397690} due to Huxley and Sargos \cite{MR1318755}. 
This would slightly simplify the proof of Proposition~\ref{prop:very_low_est},
though both provide the bound $\#{\cal D}\ll X^{\frac 15 - \frac{2}{5}\delta}$ for $D\ll X^{\frac 14 - \delta}$.

Before we set up and outline the proofs of these results, we shall prove the main theorem conditional on Propositions \ref{prop:all_new_bounds} and \ref{prop:very_low_est}.

\begin{proof}[Proof of Theorem \ref{thm:main}]
  We shall show Theorem~\ref{thm:main} with an error term of $O(X^{-\eta + o(1)})$, which
  will clearly be sufficient.
  
  By Proposition~\ref{prop:very_low_est}, for $1/U\ll \Delta\ll X^{\frac{1}{100}}$, we have that
  \begin{equation}
    \label{eq:D_bound_final_proof}
    \#\mc D_{[H\Delta, 2H\Delta]}\ll \frac HU.
  \end{equation}
  By \eqref{eq:lower_regime_est} and \eqref{eq:upper_regime_easy}, we have that \eqref{eq:D_bound_final_proof} holds for $\Delta\in [X^{\frac{1}{100}}, H^{\frac{1}{2}}U^{-1000}]\cup [H^{\frac{1}{2}}U^{13}, X^{\frac{1}{2}}/H]$. 
  For $H^{\frac 12}U^{-1000}\ll \Delta\ll H^{\frac 12}U^{13}$, we have 
  \[
    \#{\cal D}\ll HU^{O(1)}X^{-\sigma}.
  \]
  For $\eta$ sufficiently small, this is $\ll H/U$. It follows from \eqref{eq:mob_exp} that
  \[
    \sum_{X\le n\le X + H} \mu^2(n) = \frac{6}{\pi^2}H + O\biggl(\frac HU\log X\biggr)
    = \frac{6}{\pi^2}H(1 + O(X^{-\eta + o(1)})),
  \]
  for $H\asymp X^{\frac 15 - \eta}$, from which the desired result follows (perhaps with a smaller $\eta $).
  The result for larger $H$ follows by averaging over shorter
  intervals. 
\end{proof}

\subsection{Proof of Proposition \ref{prop:very_low_est}}
\label{sec:verylow}

Proposition~\ref{prop:very_low_est} is a direct consequence of the following result of Filaseta and
Trifonov \cite{MR1397690}:
\begin{theorem}[{\cite[Theorem 6]{MR1397690}}]
  \label{thm:FT_very_low_bound}
  Fix some integer $r\ge 3$.
  Suppose that $N > 1, T, \delta > 0$ and that $f$ has $r$ derivatives with
  $f^{(j)}(x)\asymp TN^{-j}$ for $j\in\set{r-2, r-1, r}$.
  Furthermore, suppose that we have 
  \[
    \delta \le  c_r\min\set*{TN^{-r + 2}, T^{\frac{r - 4}{r - 2}}N^{-r + 3} + TN^{-r + 1}}
  \]
  for $c_r > 0$ sufficiently small (dependent also on the choice of implied constants
  in the bounds $f^{(j)}(x)\asymp TN^{-j}$). 
  Then, we have that
  \[
    \#\set{n\sim N : \norm{f(n)}\le\delta} \ll T^{\frac{2}{r(r + 1)}}N^{\frac{r - 1}{r + 1}}
    + N\delta^{\frac{2}{(r - 1)(r - 2)}}
    + N(\delta TN^{1 - r})^{\frac{1}{r^2 -3r + 4}}.
  \]
\end{theorem}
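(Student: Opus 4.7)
The plan is to prove the estimate by induction on $r$, combining a divided-difference argument that produces the main term with an inductive reduction $r\mapsto r-1$ via finite differencing that produces the two lower-order terms.

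\textbf{Setup.} Let $\mathcal{N} = \{n\sim N : \|f(n)\|\le\delta\}$ and list its elements $n_1 < n_2 < \cdots < n_M$; write $m_j = \mathrm{round}(f(n_j))$, so $|f(n_j) - m_j|\le\delta$. For any step $h\ge 1$, set $f_h(x) = f(x+h) - f(x)$. By the mean value theorem $f_h^{(j)}(x)\asymp hTN^{-j-1}$ for $j\in\{r-3,r-2,r-1\}$, and whenever $n, n+h\in\mathcal{N}$ we have $\|f_h(n)\|\le 2\delta$; thus $f_h$ satisfies the hypotheses of the theorem at level $r-1$ with parameters $(T',N',\delta') = (hT/N,\, N,\, 2\delta)$.

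\textbf{Main term.} Take $r+1$ consecutive elements $n_i,n_{i+1},\ldots,n_{i+r}$ with common gap $\asymp H_0$. The $r$-th divided difference of $f$ at these points equals $f^{(r)}(\xi)/r!\asymp TN^{-r}$, while the corresponding divided difference of the integer-valued sequence $m_\cdot$ is a rational with denominator of size $\ll H_0^{r(r+1)/2}$. The two differ by $\ll \delta H_0^{-r}$, so (under the assumed smallness of $\delta$) the integer-rescaled divided difference is confined to an interval producing $\ll TH_0^{r(r+1)/2}N^{-r}$ admissible values. Each value determines the configuration up to $O(1)$ ambiguity, so $M\ll N/H_0 + (N/H_0)\cdot TH_0^{r(r+1)/2}N^{-r}$, and optimizing $H_0$ yields the leading term $T^{2/(r(r+1))}N^{(r-1)/(r+1)}$.

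\textbf{Lower-order terms.} Partition $\mathcal{N}$ dyadically by the consecutive gap $h_i = n_{i+1}-n_i$; the subset with $h_i\asymp H_0$ has size at most $\#\{n\sim N : \|f_{H_0}(n)\|\le 2\delta\}$. Applying the inductive hypothesis at level $r-1$ to $f_{H_0}$ gives contributions corresponding to its second and third terms, namely $N\delta^{2/((r-2)(r-3))}$ and $N(\delta H_0 TN^{-r})^{1/((r-1)^2-3(r-1)+4)}$; summing over dyadic $H_0$ and optimizing against the trivial bound $\sum H_0 |\mathcal{N}_{H_0}|\le N$ produces the claimed terms $N\delta^{2/((r-1)(r-2))}$ and $N(\delta TN^{1-r})^{1/(r^2-3r+4)}$. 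The hypothesis $\delta\le c_r\min(TN^{-r+2},\,T^{(r-4)/(r-2)}N^{-r+3}+TN^{-r+1})$ is exactly what guarantees that at each inductive step the corresponding hypothesis on $\delta'$ at level $r-1$ is satisfied, and the two terms in the minimum correspond to the two invariants that must be preserved along the recursion.

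\textbf{Main obstacle.} The hardest part is the bookkeeping: verifying that the iterated inductive bounds combine with $N/H_0$, after dyadic summation and optimization in $H_0$, to give exactly the exponents $2/(r(r+1))$, $2/((r-1)(r-2))$, and $1/(r^2-3r+4)$. Identities such as $r^2-3r+4 = (r-1)(r-2)+2$ arise naturally from this reindexing. The base case $r=3$ is a classical Swinnerton-Dyer/Jarnik type count of lattice points near a smooth curve with bounded first three derivatives, which can be established directly from the four-point divided difference argument of Step 2 together with trivial gap considerations.
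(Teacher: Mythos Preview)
The paper does not prove this theorem: it is quoted as \cite[Theorem~6]{FT2} and applied as a black box to deduce Proposition~\ref{prop:very_low_est}. There is no proof in the paper to compare against; your proposal is an attempted reconstruction of the Filaseta--Trifonov argument.

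The architecture you outline --- a divided-difference/determinant argument for the leading term and iterated finite differencing $f\mapsto f_h$ for the rest --- does match the philosophy of~\cite{FT2}, but the ``Main term'' step has a genuine gap. The claim that each admissible value of the integer-rescaled $r$-th divided difference ``determines the configuration up to $O(1)$ ambiguity'' is false: a single integer of size $\asymp TH_0^{r(r+1)/2}N^{-r}$ does not pin down the tuple $(n_i,\dots,n_{i+r})$, so the inequality $M\ll N/H_0 + (N/H_0)\cdot TH_0^{r(r+1)/2}N^{-r}$ is unjustified as written. The route that actually yields this exponent separates the cases where the integer determinant vanishes (the points then lie on a polynomial of degree $\le r-1$, and one must bound lattice points near that lower-degree curve) and where it is $\ge 1$ in absolute value (forcing a spacing lower bound); your optimisation in $H_0$ recovers the correct exponent only because the power of $H_0$ happens to match, not because the counting step is sound. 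The inductive reduction for the two lower-order terms is also not a clean black-box application of the level-$(r-1)$ statement: in~\cite{FT2} the differencing is interleaved with the spacing and determinant constraints at each stage, and simply feeding the full level-$(r-1)$ conclusion back in and optimising over dyadic $H_0$ does not reproduce the exponents $2/((r-1)(r-2))$ and $1/(r^2-3r+4)$ without further input.
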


Taking $T = X/D^2, N = D, \delta = H/D^2, f(n) = X/n^2, r = 4$, it can be checked that the
hypotheses of Theorem \ref{thm:FT_very_low_bound} hold as long as
\[
  \frac{H}{D^2} \le c \frac{X}{D^5}
\]
for some sufficiently small $c > 0$. Rearranging, this follows from the hypothesis that
$D\lll (X/H)^{\frac 13}$. We therefore obtain the bound
\[
  \#\set{d\sim D : \norm{\frac{X}{d^2}}\le \frac{H}{D^2}} \ll X^{\frac{1}{10}}D^{\frac 25} + D^{\frac 13}H^{\frac 13} + D\pfrc{HX}{D^7}^{\frac 18}.
\]
Since $H\ll X^{\frac 15}$, it can be checked that the second term may be absorbed into the first.
The desired result follows.

The remainder of this paper will be devoted to showing Proposition \ref{prop:all_new_bounds},
and the next few subsections below shall be setup common to all parts of Proposition
\ref{prop:all_new_bounds}. Then, in \S\ref{sec:lower} we show \eqref{eq:lower_regime_est} and in \S\ref{sec:upper} we show \eqref{eq:upper_regime_easy} and \eqref{eq:upper_regime_est}.

\subsection{Initial maneuvers for Proposition \ref{prop:all_new_bounds}}
Write
\begin{equation}\label{eq:cq5rbpwe1z}
  \mc D_a = \set{d : d, d + a\in \mathcal{D}\text{ such that } (d, d + a)\cap \mathcal{D} = \emptyset}
\end{equation}
By the bound $\#\mathcal{D} \ge H/U$ we have as a hypothesis in Proposition~\ref{prop:all_new_bounds}, we
have that
\[
  \#\mc D\ll \frac{H}{U} + \sum_{1\le A\le \Delta U}\sum_{a\sim A} \#\mc D_a + \frac{D}{\Delta U} \ll \frac{H}{U} + \sum_{1\le A\le \Delta U}\sum_{a\sim A} \#\mc D_a,
\]
where $A$ runs over powers of $e$.

We may immediately discard from consideration some ranges of $A$. By an observation of
Nair~\cite{MR531167} following work of Roth \cite{MR43119}, we have that among any three consecutive
elements of $\mc D$, two are spaced by $\gg \Delta^{\frac 43}(H^4/X)^{\frac 13}$. In particular, we obtain the
following, which is \cite[(8)]{MR1171549}:
\begin{equation}\label{eq:pigeonholed}
  \#\mc D\ll 1 + \frac{H}{U} + \sum_{1 + \Delta^{\frac 43}(H^4/X)^{\frac 13}\ll A\ll\Delta U} \sum_{a\sim A} \#\mc D_a.
\end{equation}
From now on, we fix some scale $A$ in the above range. The main difficulties will
be encountered for $A$ at the upper end of the range.

\subsection{Differencing identities}\label{sec:cq5rbqt63z}
\label{sec:diff_identities}
Key to elementary progress on the problem of bounding $\theta^*$ have been ``differencing identities''.
The basic idea starts with the observation that
the condition $d\in\mc D$ implies (and is essentially equivalent to) the condition
\[
  \norm{\frac{X}{d^2}}\ll \frac{H}{D^2} = \frac{1}{H\Delta^2}.
\]
Assuming $\#{\cal D}$ is large, the idea then is to take well-chosen $\ZZ$-linear combinations
of this identity to obtain small quantities near integers. Then, by exploiting the fact
that an integer of size less than $1$ must be $0$, further information can be obtained.
The hope is that this ends with a contradiction if ${\cal D}$ is too close to $H$ in size.

The most basic differencing identity, which, used appropriately, gives a bound of
$\theta^*\le 1/3$ is $F_a(d)$, defined as
\begin{equation}\label{eq:cq5rbqjd9g}
  F_a(d) := \frac{X}{d^2} - \frac{X}{(d + a)^2}.
\end{equation}
Then, we have that $\norm{F_a(d)}\ll \frac{1}{H\Delta^2}$. The virtue of this is that $F_a(d)$ is
smaller than $X/d^2$ for $a$ small. More precisely, from the Taylor expansion 
\begin{equation}\label{eq:taylor_std}
  \frac{1}{(1 + x)^2} = 1 - 2x + 3x^2 + \dots,
\end{equation}
we have for $|a| < d $ that
\begin{equation}\label{eq:std_exp}
  F_a(d) = \frac{Xa}{d^3}\biggl(2 - 3\frac ad + \dots\biggr).
\end{equation}
Key to this paper shall be the following identity from work of Roth~\cite{MR43119}
(which was used to give a bound of $\theta^*\le 1/4$). Let
\begin{equation}\label{eq:cq5rbql9yk}
  R_a(d) := -(2d - a)\frac{X}{d^2} + (2d + 3a)\frac{X}{(d + a)^2}.
\end{equation}
Note that if $d, d + a\in\mc D$, we have that $\norm{R_a(d)}\ll 1/\Delta$.
The expansion of $R_a(d)$ we require is as follows:
\begin{equation}\label{eq:roth_exp}
  R_a(d) = \frac{Xa^3}{d^2(d + a)^2} = \frac{Xa^3}{d^4}\biggl(1 - 2\frac ad + \dots\biggr).
\end{equation}
The fact that $U\le H^{\frac 12}$ implies that the above series converges for $a\sim A$ as we have
$A\ll U\Delta\ll D/H^{\frac 12}$. 
We record here some implications of this which shall be important in what follows.

We shall regard $R_a$ as a map from $(D/2, 5D/2)$ to $\mathbb{R}$, in which case it is injective, and
let $\tilde d_a := R_a^{-1}$ be defined on $\rho\asymp R$ with 
\begin{equation}\label{eq:11}
  R := \frac{XA^3}{\Delta^4H^4}.
\end{equation}
From the series \eqref{eq:roth_exp}, for $j\ge 0$ and $\rho \asymp R$ we have 
\begin{equation}
  \label{eq:tilded_ests}
  \tilde d_a^{(j)}(\rho)\asymp_j \frac{H\Delta}{R^j}.
\end{equation}
It follows that for $r, r'\asymp  R$, we have
\begin{equation}
  \label{eq:tilded_appro_inv}
  \bigg|\frac{\tilde d_a(r)}{\tilde d_a(r')} - 1\bigg|\ll \bigg|\frac{r}{r'} - 1\bigg|.
\end{equation}

\subsection{An approximate parametrization of \texorpdfstring{$\mc D$}{}}

We shall obtain a better understanding of ${\cal D}_a$ through an
``approximate parametrization'' of $\mc D$ coming from Roth's differencing.
The result is as follows:
\begin{proposition}\label{prop:approx_param}

  There exist $\mc R_a\subset\NN$ with $r\asymp R$ for all $r\in{\cal R}_a$ and a map
  \begin{equation}\label{eq:4}
    d_a^* : \mc R_a\to\mc D_a
  \end{equation}
  satisfying the following two bounds:
  \begin{equation}\label{eq:D_aR_a_size_bound}
    \frac{\#\mc D_a}{\#\mc R_a}\ll 1 + \biggl(\frac{\Delta}{A}\biggr)^{\frac 83}\frac{1}{G^{\frac 23}},
  \end{equation}
  \begin{equation} \label{eq:d_approx}
    d_a^*(r) = \tilde d_a(r) + O\biggl(\frac{\Delta}{G}\frac{\Delta^3}{A^3}\biggr).
  \end{equation}
\end{proposition}

We will require the following spacing result for $\mc D_a$ to show Proposition~\ref{prop:approx_param}.
This follows from elementary considerations involving Roth's differencing.
\begin{lemma}\label{lem:Da_space}
  We have that consecutive elements of $\mc D_a$ are at least $a^{-\frac 13}\Delta^{\frac 53}(H^5/X)^{\frac 13}$-spaced.
\end{lemma}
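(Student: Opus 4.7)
The plan is to apply Roth's differencing identity at the scale $g := d_2 - d_1$ to the two pairs $(d_1, d_2)$ and $(d_1+a, d_2+a)$, both of which consist of elements of $\mc D$ separated by $g$, and to compare the resulting near-integer quantities.

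First, I will record the trivial lower bound $g \ge a$: by the definition of $\mc D_a$, the $\mc D$-successor of $d_1$ is $d_1 + a$, and $d_2$ is a later element of $\mc D$, so $d_2 \ge d_1 + a$.

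Next, by the Roth identity applied to each pair, both $R_g(d_1)$ and $R_g(d_1+a)$ lie within $O(1/\Delta)$ of an integer, so the same holds for their difference. By Taylor expansion at the scale $a$ using (\ref{eq:roth_exp}),
\[
  R_g(d_1+a) - R_g(d_1) = a R_g'(d_1) + O(a^2 R_g''(d_1)) \asymp \frac{X a g^3}{D^5}.
\]
The ensuing dichotomy is: either the nearest integer is $0$ (the \emph{small} case, forcing $g \ll (H^5 \Delta^4/(Xa))^{1/3}$), or it is nonzero (the \emph{big} case, giving $g \gg (H^5 \Delta^5/(Xa))^{1/3}$, which is precisely the target $a^{-1/3}\Delta^{5/3}(H^5/X)^{1/3}$).

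The main obstacle will be to rule out the small case. Combining its bound with $g \ge a$ forces $a^4 \ll H^5 \Delta^4/X$, which contradicts the lower bound $a \gg 1 + (D^4/X)^{1/3}$ coming from (\ref{eq:pigeonholed}) provided $D^4 \gg H^3 X$. In the complementary regime, one supplements with an auxiliary three-point Nair-type identity on $(d_1, d_1+a, d_2) \subset \mc D$ to close the gap, or observes that the target is then itself $\ll 1$ and the conclusion is reduced to the trivial integer spacing $g \ge 1$.
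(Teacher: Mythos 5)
Your dichotomy is set up correctly: differencing Roth's identity at scale $g=d_2-d_1$ across the shift by $a$ does give a quantity of size $\asymp Xag^3/D^5$ lying in $\ZZ+O(1/\Delta)$, the bound $g\ge a$ is valid, and the nonzero-integer case yields exactly the claimed spacing $(D^5/(Xa))^{1/3}=a^{-\frac13}\Delta^{\frac53}(H^5/X)^{\frac13}$. The genuine gap is the small case (nearest integer $0$), which you exclude only when $D^4\ggg H^3X$, and neither of your fallbacks closes the complementary range. Quantitatively, the small case gives $ag^3\ll HD^4/X$, hence with $g\ge a$ only $a\ll D(H/X)^{1/4}$, which contradicts the standing reduction $a\gg\Delta^{\frac43}(H^4/X)^{\frac13}=(D^4/X)^{1/3}$ from (\ref{eq:pigeonholed}) precisely when $D\ggg H^{3/4}X^{1/4}$. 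But the range $D\lll H^{3/4}X^{1/4}$ contains the paper's critical regime $\Delta\asymp H^{1/2}$, i.e.\ $D\asymp H^{3/2}\lll H^{3/4}X^{1/4}$ since $H\ll X^{1/5}$; there the right-hand side of the lemma is a positive power of $X$ (for $H\asymp X^{1/5}$, $D\asymp H^{3/2}$, $a\asymp (D^4/X)^{1/3}$ it is about $X^{13/90}$), so the conclusion cannot be "reduced to the trivial integer spacing $g\ge 1$". The suggested three-point identity on $(d_1,d_1+a,d_2)$ is also insufficient as stated: its coefficients are $O(g)$ and its main term is $\asymp Xa(g-a)g/D^4$, so its nonzero-integer case gives only $g\gg (D^4/(Xa))^{1/2}$, which is weaker than $(D^5/(Xa))^{1/3}$ whenever $Xa\gg D^2$, i.e.\ always in our setting. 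So as written your argument proves the lemma only for $D\ggg H^{3/4}X^{1/4}$, and it is open exactly where it is needed.

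The source of the loss is that Roth's identity carries coefficients $2d-g,\,2d+3g\asymp D$, so each application (and hence your difference) is known to be near an integer only up to $O(D\cdot H/D^2)=O(1/\Delta)$. The paper's proof is a citation to \cite[\S 2]{FT}, and the identity used there is the four-term combination $S_{a,b}$ that the paper records later at (\ref{eq:S_taylor}) and explicitly credits to this lemma: taking $b=g$, $S_{a,g}(d_1)$ is a combination of the four values $X/d^2$ at $d_1,\,d_1+a,\,d_2,\,d_2+a$ with the small weights $\pm(g\pm a)$, so it lies in $\ZZ+O(g/(H\Delta^2))$ while its main term is still $\asymp Xag^3/D^5$ (say for $g\ge 2a$; the range $g<2a$ requires extra care, as in \cite{FT}). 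Its small case therefore forces $ag^2\ll HD^3/X$, and since $g\ge a$ while $a^3\gg D^4/X=\Delta\cdot HD^3/X$, this is contradictory once $\Delta$ exceeds a large constant; in the remaining range $\Delta\ll 1$ one has $(D^5/(Xa))^{1/3}\le (H^5/X)^{1/3}=G^{-1/3}<1$ and the lemma is trivial. The factor $g/D$ you give up in the error term relative to $S_{a,g}$ is precisely what prevents you from closing the small case; replacing your differenced Roth identity by $S_{a,g}(d_1)$ within your two-pair framing is the repair.
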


\begin{proof}
  See, for example, \cite[\S2]{MR1171549} for a proof of this and a discussion of previous incarnations of this estimate.
\end{proof}

\begin{proof}[Proof of Proposition \ref{prop:approx_param}]
  Note that for all $d\in\mc D_a$, we have that $\| R_a(d) \|\ll 1/\Delta $, and so there exists a
  map $r_a^* : \mathcal{D}_a\to \mathbb{N}  $ such that 
  \begin{equation}\label{eq:3}
    R_a(d) = r^*_a(d) + O\pfrc{1}{\Delta}
  \end{equation}
  for all $d\in \mathcal{D}_a $. We will also take $\mathcal{R}_a = r_a^*(\mathcal{D}_a) $ and take any $d_a^* : \mathcal{R}_a\to \mathcal{D}_a$ such that
  $r_a^*\circ d_a^* = \mathrm{id}_{\mathcal{R}_a} $. It is easy to see that $r\in \mathcal{R}_a $ satisfy $r\asymp R $ in light of
  \eqref{eq:3} and the fact that $R_a(d)\asymp XA^3/D^4\asymp R $. 
  
  We claim that for any $r\in \mathcal{R}_a $, the preimage $(r_a^*)^{-1}(\set{r})$ has size
  \[
    \ll 1 + \biggl(\frac{\Delta}{A}\biggr)^{\frac 83}\biggl(\frac{H^5}{X}\biggr)^{\frac 23}.
  \]

  To see this, suppose for some $k\ge 2$, there exist distinct
  $d_1< \dots< d_k$ such that $r_a^*(d_1) =\dots = r_a^*(d_k)$.
  Then, we have 
  \begin{equation}\label{eq:R_sep}
    |R_a(d_1) - R_a(d_k)|\ll \frac{1}{\Delta}.
  \end{equation}
  On the other hand, by \eqref{eq:roth_exp}, we have
  \[
    |R_a(d_1) - R_a(d_k)|\asymp \frac{XA^3}{(H\Delta)^4}\cdot \frac{|d_1 - d_k|}{H\Delta}.
  \]
  By Lemma \ref{lem:Da_space}, the above is
  \[
    \gg \frac{XA^3}{\Delta^4 H^4} k\frac{a^{-\frac 13}\Delta^{\frac 53} (H^5/X)^{\frac 13}}{H\Delta}.
  \]
  If
  \[
    k\ge C\biggl(\frac{\Delta}{A}\biggr)^{\frac 83}\biggl(\frac{H^5}{X}\biggr)^{\frac 23}
  \]
  for some $C > 0$, then we have
  \[
    |R_a(d_1) - R_a(d_k)|\gg \frac{C}{\Delta}.
  \]
  This contradicts \eqref{eq:R_sep}; hence
  \[
    k\ll 1 + \biggl(\frac{\Delta}{A}\biggr)^{\frac 83}\biggl(\frac{H^5}{X}\biggr)^{\frac 23}.
  \]
  The claimed bound on the preimage size follows.

  The bound on the size of the preimage implies the bound \eqref{eq:D_aR_a_size_bound} of
  Proposition~\ref{prop:approx_param}.

  It remains to verify the estimate \eqref{eq:d_approx} on $d_a^*$. To see this, note that
  for $r\in\mc R_a$, we have
  \[
    r = R_a(d_a^*(r)) + O\pfrc{1}{\Delta}
    = R_a(d_a^*(r))\biggl(1 + O \biggl(\frac{1}{R\Delta}\biggr)\biggr).
  \]
  Rearranging to solve for $d_a^*(r)$ using \eqref{eq:tilded_appro_inv} yields that
  \[
    d_a^*(r) = \tilde d_a(r)\biggl(1 + O \biggl(\frac{1}{R\Delta}\biggr)\biggr),
  \]
  as desired, since it can be checked that $\frac{D}{R\Delta}\asymp \frac{\Delta}{G\Omega^3}$.
\end{proof}

\subsection{Outline of the proof of Proposition \ref{prop:all_new_bounds}} \label{sec:outline_setup}
At this point, applying the trivial bound corresponds roughly to the argument of
Filaseta--Trifonov \cite{MR1171549} and recovers the bound $\theta^*\le 1/5$. We will succeed in going
past $1/5$ by obtaining restrictions on the size of $\mc R_a$, with different treatments for
$\Delta$ in different ranges relative to $H^{\frac{1}{2}}$.

For small $\Delta$, we use new differencing identities to obtain finer information about
$\mc D_a$ than is implied by Proposition \ref{prop:approx_param}. More precisely, Proposition \ref{prop:approx_param} implies that
for $\Delta$ small, if $\mc R_a$ has size near $R$, then $\mc D_a$ contains many approximate linear patterns
by the pigeonhole principle. We use a differencing identity to obtain nontrivial
information about the defect of these approximate patterns from exact linear patterns.
Another differencing identity then uses this information to yield restrictions on $r$ which
are exploited by the elementary point counting estimates we will show in \S\ref{sec:minor_lemmas}.

For large $\Delta$, we succeed by utilizing Proposition \ref{prop:approx_param} to obtain an estimate for
$F_a(d_a^*(r))$. When $H = X^{\frac 15}$, at the critical point, we obtain an estimate $\tilde F$ for
$F_a(d_a^*(r))$ to an error of $O(1)$. We know that $F_a(d_a^*(r))$ is within $\frac{1}{H\Delta^2}$ of an integer, so
up to $O(1)$-many values, we know $F_a(d^*_a(r))$ in terms of $\floor*{\tilde F}$. Then, using \eqref{eq:std_exp}, we
obtain an even more accurate estimate for $d^*_a(r)$, one which must be close to an integer.
If we split the sum over $r$ into short intervals and Taylor expand, we are reduced to
checking that a certain bracket polynomial is close to an integer a small proportion of
the time. It is at this point we utilize the estimates in \S\ref{sec:bracket_ests} to show that this cannot
happen a $\gg X^{-o(1)}$-fraction of the time. It is here that we require Green and Tao's \cite{GT}
quantitative Leibman theorem.  


\subsection{Final reductions}
\label{sec:final_reductions}

Combining the trivial bound $\#\mathcal{R}_a\ll R $ with Proposition~\ref{prop:approx_param} implies that
\begin{equation}
  \#\mathcal{D}_a\ll \#\mathcal{R}_a \biggl( 1 + \biggl(\frac{\Delta}{A}\biggr)^{\frac{8}{3}}\frac{1}{G^{\frac{2}{3}}} \biggr)
  \ll R \biggl( 1 + \biggl(\frac{\Delta}{A}\biggr)^{\frac{8}{3}}\frac{1}{G^{\frac{2}{3}}} \biggr).
\end{equation}
After absorbing the contribution of the small-$A$ range into the $H/U$ term in \eqref{eq:pigeonholed}, we have
\[
  \#\mathcal D\ll \frac HU+
  \sum_{\Delta G^{-1/4}U^{-3/4} \ll A \ll \Delta U}
  \sum_{a\sim A}\#\mathcal D_a.
\]

We shall therefore suppose from this point on that $A\gg \Delta G^{-\frac{1}{4}} U^{-\frac{3}{4}} $.
For slight convenience in gathering certain bounds, we write
$A/\Delta = \Omega$ so that
\begin{equation}\label{eq:cq6ufgf398}
  G^{-\frac{1}{4}}U^{-\frac{3}{4}}\ll \Omega \ll U.
\end{equation}
\subsection{On the exponent $1/5$ and differencing identities}
\label{sec:15barrier_explain}

In this section, we will discuss heuristically why $1/5$ appears to be the limit of
what one can obtain from differencing identities (as introduced in \S\ref{sec:diff_identities}).
For the purposes of this discussion, we will consider how one might go about ruling out
the situation that $\#{\cal D}\asymp H$ (that the upper bound holds follows from the divisor bound)
for $1\ll\Delta\ll X^{\frac 12}/H$. It is to this that counting squarefree numbers in $[X, X + H]$
essentially boils down.

We will also restrict ourselves to the situation that ${\cal D}$ is $\Delta$-spaced, and suppose too
that ${\cal D}_a$ is $\Delta^2$-spaced for $a\sim\Delta$ (generically, one should expect this to be the case).

We shall outline the main input behind the proofs of the upper bounds $1/3$, $1/4$, $1/5$ for
$\theta^*$ due to Estermann (described in \cite{MR43119}), Roth~\cite{MR43119}, and Filaseta-Trifonov~\cite{MR1171549}
respectively.

Recall that the basic idea behind differencing identities is to lower the size of
appropriate $\ZZ$-linear combinations of various $X/d^2$ for $d\in{\cal D}$. One way to succeed is to
observe that if we can control the size enough that we know it is $\lll 1$, but still larger
than its distance to the nearest integer, we obtain a contradiction.

The most basic example of a differencing identity is to simply take the difference of two nearby
elements in ${\cal D}$. More precisely, recall
\[
  F_a(d) = \frac X{d^2} - \frac{X}{(d + a)^2}.
\]
Consideration of this yields $\theta^*\le 1/3$, as we outline now.
If we take $d, d + a\in\mc D$, then $\norm*{F_a(d)}\ll 1/(H\Delta^2)$. If $a\sim\Delta$, then by Taylor expanding
\[
  F_a(d)\asymp \frac{X}{H^3\Delta^2}.
\]
One can check this is $\ggg 1/(H\Delta^2)$ and $\lll 1$ when $X^{1/3}\lll H\lll X^{1/2}$, which is a contradiction,
yielding that $\theta^*\le 1/3$. This is essentially the argument of Estermann, as described by Roth~\cite{MR43119}

Similarly, the differencing identity $R_a(d)$ (refer to \S2.4 for its definition) has size $X/(H^4\Delta)$, and
thus yields nontrivial information in this fashion when $X\ll H^4\Delta$ (and thus\footnote{The argument at this point is somewhat more tricky than as above, but still requires only the information coming from the differencing identity} yields that $\theta^*\le 1/4$).
This is the main input to the argument of Roth~\cite{MR43119}.

The improvements over Roth's bound (before Filaseta and Trifonov in the papers \cite {MR1057318} and \cite{MR1171549} returned with elementary methods)
used exponential sums to deal with small
$\Delta$ not finished off by Roth's elementary method. Such methods are in practice limited to small
$\Delta$, since as $\Delta$ grows, they demand greater and greater savings from exponential sums. Large $\Delta$
are a somewhat different story, but wide intermediate ranges were out of reach (and still remain
so with exponential sum methods, barring massive improvements in techniques for bounding exponential
sums).

We will now sketch, in language closer to the original proof than our setup, the idea of Filaseta and Trifonov.
Another differencing identity one can consider comes from successive elements $d, d + b\in\mc D_a$.
With the Pigeonhole principle, we can ensure $b\ll \Delta^2$ for many $b$ for which the above holds
(showing lower bounds on $b$, and in particular that it is $\sim \Delta^2$ is actually quite nontrivial,
though we will suppose it is so in this discussion for simplicity; this is essentially the content of
our Lemma~\ref{lem:Da_space}, which was shown in \cite{MR1171549}).

Now, consider the differencing
\begin{equation}\label{eq:5}
  R(d, a, b) = R_a(d) - R_a(d + b).
\end{equation}
For $d, d + b\in{\cal D}_a$, since $d, d + a, d + b, d + a + b\in\mc D$, we have $\norm{R(d, a, b)}\ll 1/\Delta$.
Also, it can be checked that 
\[
  R(d, a, b)\asymp \frac{Xa^3b}{d^5}\asymp \frac{X}{H^5}.
\]
This yields a contradiction when $H$ is slightly larger than $X^{1/5}$.

One might hope there exist differencing identities which alone are capable of going
past $1/5$, but this seems not possible in the range $H^{\frac 13}\ll\Delta\ll H^{\frac 23}$, especially in the
critical case $H = X^{\frac 15}$. Given any configuration in ${\cal D}$ one might generate by
the Pigeonhole principle (such as $d, d + a, d + b, d + a + b$ or $d, d + a_1, d + a_2$, for example),
one can find the best possible differencing identity by solving a system trying to cancel
out terms in the Taylor expansion of $X/d_1^2$ for $d_1$ near $d$.

One can try such maneuvers as fibering over $b\sim \Delta^2$ (constructing sets such as
${\cal D}_{a, b} = \set{d : d, d + b\text{ consecutive elements of }{\cal D}_a}$ and bounding $\#{\cal D}_{a, b}$), which provides new opportunities for differencing
identities. This is immediately doomed to fail for $\Delta\gg H^{\frac 13}$ as we cannot hope for a bound on ${\cal D}$ better than the number of $(a, b)$,
which is $\asymp\Delta^3$ (and succeeding for $\Delta$ as large as $H^{\frac 13}$ seems a challenge on its own).  

One can also seek to build differencing identities out of configurations
$d, d + a_1, \dots, d + a_k\in{\cal D}$ for $a_i\sim\Delta$. In this case, it also seems a differencing
identity with coefficients that are polynomials in $d, a_1,\dots,a_k$ (with monomials that are
not unacceptably large) is not possible unless $\Delta$ is very small.
\footnote{Interestingly, the range of $\Delta$ in which one can succeed here for $H = X^{o(1)}$ is stronger than that which
  Vinogradov's exponential sum bounds yield.}
In the case of $H = X^{1/5}$, it seems this direction is only capable of producing improvements when $\Delta\lll H^{1/4}$ (and, as before,
this can be checked by solving a system in the coefficients of the monomials in the differencing identity).

Combining these two ideas, such as by seeking to build differencing identities of the out of configurations like
$d, d + a_1, d + a_2, d + b, d + b + a_1, d + b + a_2$ with $a_i\sim\Delta, b\sim\Delta^3$ also similarly seems incapable of saying anything new.

Our methods are able to go beyond what an individual differencing identity can yield by more carefully studying the structure of ${\cal D}$
starting with our ``approximate parametrization'' Proposition~\ref{prop:approx_param}, as we outlined in more detail
at the end of \S\ref{sec:outline_setup}. 

It turns out that our two elementary estimates \eqref{eq:lower_regime_est} and \eqref{eq:upper_regime_easy} fail at
precisely the range $H = X^{1/5}, \Delta = H^{1/2}$, and we are required to boost \eqref{eq:upper_regime_est} by using the quantitative
Leibman theorem of Green and Tao \cite{GT}. It seems there is something critical at these ranges, though we
do not have any deeper explanation or justification for this claim of criticality.

\section{Elementary estimates on points near curves}
\label{sec:minor_lemmas}
In this section, we record some minor technical lemmas that will be used several times
in the proof of Proposition \ref{prop:all_new_bounds}.

\begin{lemma}\label{lem:crude_frac_bound}
  For some interval $I$, suppose that $\phi \in C^1(I) $ satisfies
  \[
    \bigg|\sup_{x\in I} \phi(x) - \inf_{x\in I}\phi(x)\bigg|\ll V, |\phi'(x)|\asymp F
  \]
  for $x\in I$.
  Then, uniformly in $0 < \delta < 1$, we have
  \[
    \sum_{n\in I}\charf{\norm{\phi(n)}\le\delta}\ll (V + 1)\biggl(1 + \frac{\delta}{F}\biggr). 
  \]
\end{lemma}
\begin{proof}

  Since $V/F\gg |I| $, the result is trivial if $\delta\gg 1 $, so suppose it is $< 1/2 $ in what remains.
  
  Due to the derivative bound on $\phi$, for any interval $J$, we have
  \begin{equation}\label{eq:18}
    |\phi^{-1} (J)|\ll \frac{1}{F} |J|.
  \end{equation}
  We also have
  \[
    \sum_{n\in I}\charf{\norm{\phi(n)}\le\delta}\le\sum_{m\in\ZZ}\sum_{n\in I}\charf{\phi(n)\in [m - \delta, m + \delta]} = \sum_{m\in\ZZ} \sum_{n\in \phi^{-1}([m - \delta, m + \delta])}1 .
  \]
  Because of the upper bound on the variation of $\phi$, the above sum is over $m$ supported
  on $m$ in an interval of length $\ll V + 1$. Along with \eqref{eq:18}, it follows that the
  above is $\ll (V + 1)(1 + \delta/F)$.
\end{proof}

We shall also require the following variant which allows for the derivative of $\phi$ to be small at a point.
This captures the situation in which $\phi$ is approximately the sum of two monomials.
\begin{lemma}\label{lem:crude_stat_est}
  Suppose that $I\subset [c_1N, c_2N]$ for some fixed $c_1 < c_2$ and that $\phi\in C^2(I) $ satisfies
  \begin{equation*}
    |\phi'(x)| \ll \frac{T}{N}, |\phi'(x)| + N|\phi''(x)|\asymp \frac{T}{N}
  \end{equation*}
  for $x\in I$. Suppose also that $\phi', \phi''$ have at most $O(1)$-many zeros, and
  that $0 < \delta < 1$. 
  Then, we have
  \[
    \sum_{n\in I}\charf{\norm{\phi(n)}\le\delta}\ll N\biggl(\delta + \frac{\delta^{\frac 12}}{T^{\frac 12}}\biggr) + T + 1.
  \]
\end{lemma}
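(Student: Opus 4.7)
The plan is to split $I$ into $O(1)$ subintervals via the zeros of $\phi'$ and $\phi''$. On each such subinterval $J$, the hypothesis $|\phi'|+N|\phi''|\asymp T/N$ forces one of two alternatives: either $|\phi'|\asymp T/N$ throughout $J$ (the ``regular'' case), or $|\phi''|\asymp \lambda := T/N^2$ throughout $J$ (the ``stationary'' case). Since there are only $O(1)$ subintervals, it suffices to establish the claimed bound for each individual $J$.

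In the regular case, I would simply apply Lemma \ref{lem:crude_frac_bd} with $V\asymp T/N$ and $L\ll T$, obtaining $\ll (T+1)(1+\delta N/T) \ll T + N\delta + 1$.

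In the stationary case, $\phi'$ is monotonic on $J$, so $|\phi'(x)|\asymp \lambda|x-x_0|$ on $J$ for some $x_0$ that is either a zero of $\phi'$ in $\bar J$ or lies outside $J$ (obtained by linearly extrapolating $\phi'$). Setting $s^* := (\delta/\lambda)^{1/2} = N\delta^{1/2}/T^{1/2}$, I would split $J = J_n \cup J_f$ with $J_n := \{x\in J : |x-x_0|\le s^*\}$ and $J_f := J\setminus J_n$. The near piece $J_n$ contributes $\ll \#(J_n\cap\ZZ)\ll s^*+1 \ll N\delta^{1/2}/T^{1/2}+1$ trivially. For the far piece $J_f$, I would count the bad integers by summing over the integer values $m\in\phi(J_f)$ taken by $\phi$: the preimage $\phi^{-1}([m-\delta,m+\delta])\cap J_f$ consists of at most two intervals (one on each monotonic component of $J_f$), each of length $\ll\delta/|\phi'(x_m)|$ where $x_m := \phi^{-1}(m)$, and therefore containing $\ll\delta/|\phi'(x_m)|+1$ integers. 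Summing the ``$+1$'' contributes $\ll T+1$ (since the variation of $\phi$ on $J_f$ is $\ll T$). For the main sum, I would use the explicit form $|\phi'(x_m)|\asymp\sqrt{2\lambda(m-\phi(x_0))}$ together with the lower bound $m-\phi(x_0)\ge\lambda s^{*2}/2 = \delta/2$; comparing to an integral then gives
\[
  \sum_m \frac{\delta}{|\phi'(x_m)|} \ll \frac{\delta}{\sqrt\lambda}\bigg(\frac{1}{\sqrt\delta} + \sqrt T\bigg) \ll \frac{N\delta^{1/2}}{T^{1/2}} + N\delta.
\]

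The main obstacle is precisely this stationary-point analysis: naively lower-bounding $|\phi'|$ over all of $J_f$ and applying Lemma \ref{lem:crude_frac_bd} would lose polynomial factors, so one must exploit the quadratic shape $\phi(x)-\phi(x_0)\approx(\lambda/2)(x-x_0)^2$ near $x_0$ and essentially perform a stationary-phase-style count by parameterizing the bad set by integers $m$ rather than integers $n$. Combining the contributions from each of the $O(1)$ subintervals then yields the desired bound $N(\delta+\delta^{1/2}/T^{1/2}) + T + 1$.
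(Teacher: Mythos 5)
Your proof is correct in substance but takes a genuinely different route from the paper's in the stationary regime. The paper, after reducing to the case where $\phi''$ has constant sign, decomposes $I$ according to the dyadic size of $|\phi'|$, namely the sets where $|\phi'|\asymp e^{-k}T/N$ for $1\le k\le K$ with $e^{K}\asymp\sqrt{T/\delta}$, plus an outer piece with $|\phi'|\asymp T/N$ and an innermost piece of length $\ll N\sqrt{\delta/T}$ treated trivially; on each scale it bounds the length of the piece and the variation of $\phi$ there using $|\phi''|\gg T/N^2$, applies Lemma~\ref{lem:crude_frac_bd}, and sums a geometric series. You instead run a direct level-set (first-derivative-test) count: a trivial bound on $|x-x_0|\le(\delta/\lambda)^{1/2}$ with $\lambda=T/N^2$, and beyond it a sum over integer levels $m$ of $\delta/|\phi'(x_m)|\asymp\delta/\sqrt{\lambda(m-\phi(x_0))}$, which evaluates to the same $N\delta+N\sqrt{\delta/T}$, with the $T+1$ term coming from the number of levels. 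Your computation is sound and arguably more conceptual (it is the standard stationary-point count); the paper's version buys economy by reusing Lemma~\ref{lem:crude_frac_bd} at every scale and never needing to locate or extrapolate the stationary point $x_0$.

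One imprecision you should repair: cutting $I$ only at the zeros of $\phi'$ and $\phi''$ does not force each piece to be entirely ``regular'' or entirely ``stationary.'' Between a zero of $\phi'$ and a zero of $\phi''$, the size of $|\phi'|$ can range from $0$ up to $\asymp T/N$ while $|\phi''|$ decays from $\asymp T/N^2$ to $0$, so neither alternative holds throughout such a piece. The fix stays within your framework: since $\phi'$ is monotone between zeros of $\phi''$, additionally cut at the $O(1)$ points where $|\phi'|=cT/N$ for a suitable small constant $c$. On the pieces with $|\phi'|\ge cT/N$ run your regular case (Lemma~\ref{lem:crude_frac_bd} with $V\asymp T/N$), and on the pieces with $|\phi'|\le cT/N$ the hypothesis $|\phi'|+N|\phi''|\asymp T/N$ forces $|\phi''|\gg T/N^2$, while $|\phi''|\ll T/N^2$ holds unconditionally, so $|\phi''|\asymp\lambda$ there — which is exactly what your quadratic-shape analysis (both the upper and lower bounds in $|\phi'(x)|\asymp\lambda|x-x_0|$) requires.
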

It can be checked that the $\delta^{\frac 12}/T^{\frac 12}$ term is necessary: consider, for example
$\phi(n) = T\pfrc{n - N}{N}^2$, for any $\delta > 0$. 
\begin{proof}

  We may suppose that $\delta < 1 $ in what follows.
  
  Without loss of generality, suppose that $\phi''$ does not change sign
  (as we can split $I$ into $O(1)$-many subintervals to reduce to this case).
  Let $c\ll 1$ be such that $|\phi'(x)|\le c\frac TN$ implies that $|\phi''(x)|\gg T/N^2$.
  Now, let $K = \ceil{\log(T/\delta)/2}$ and take 
  \begin{align*}
    I_> &= \set{x\in I :|\phi'(x)|\ge c\frac TN},\\
    I_k &= \set{x\in I : |\phi'(x)|\sim ce^{-k}\frac TN}, \\
    I_< &= \set{x\in I : |\phi'(x)| < ce^{-K}\frac TN}, 
  \end{align*}
  so that
  \[
    I = I_>\cup I_<\cup\biggl(\bigcup_{1\le k\le K} I_k\biggr).
  \]

  Note that the monotonicity of $\phi'$ implies that $I_>, I_k, I_<$ are all the union of up to two
  intervals. By the lower bound on $\phi''$ in $I_k$ implied for $k\ge 1$ by our earlier discussion,
  the length of $I_k$ is $\ll \frac{e^{-k} T/N}{T/N^2} = e^{-k} N$.
  By the bound on $\phi'$, $\phi$ varies by $\ll e^{-2k}T$ in $I_k$.
  Similarly, it varies by $\ll e^{-2K}T$ in $I_<$, and the length of  $I_<$ is $\ll e^{-K}N$.
  
  The latter implies by a trivial bound that 
  \[
    \sum_{n\in I_<}\charf{\norm{\phi(n)}\le\delta}\le 1 + e^{-K}N\ll 1 + N\sqrt{\frac{\delta}{T}}.
  \]
  For $k\le K$, we have by Lemma~\ref{lem:crude_frac_bound} that 
  \begin{align*} 
    \sum_{n\in I_k}\charf{\norm{\phi(n)}\le\delta} &\ll (1 + e^{-2k}T)\biggl(1 + e^k\frac{\delta}{T/N}\biggr) \ll 1 + e^{-2k}T + e^{-k}\delta N + e^k\frac{\delta}{T}N,\\
    \sum_{n\in I_>}\charf{\norm{\phi(n)}\le\delta} &\ll (1 + T)\biggl(1 + \frac{\delta}{T/N}\biggr)\ll 1 + T + \delta N + \frac{\delta}{T}N.
  \end{align*}
  Summing over $1\le k\le K$ and collecting the bounds for the sums over $I_<, I_>$ yields
  \[
    \sum_{n\in I}\charf{\norm{\phi(n)}\le\delta}\ll \delta N + e^K\frac{\delta}{T}N + T + 1\ll N\biggl(\delta + \sqrt{\frac{\delta}{T}}\biggr) + T + 1,
  \]
  as desired.
\end{proof}

At one point we shall require the following consequence of applying 
trivial bound after Fourier expanding the indicator function
and applying Poisson summation.

It is possible to show this elementarily similarly to the other results of this
section, but we have chosen not to do so for the sake of concision.
\begin{proposition}\label{prop:poisson_trivial}
  Suppose that $F: [1/3, 3]\to\RR$ is such that $|F''|\asymp 1$.
  Then, for any $T, N, \delta > 0$, we have 
  \[
    \#\set{n\sim N : \norm{T F\pfrc nN}\le \delta}\ll N\delta + \sqrt{\frac{T}{\delta}}
    + \frac{N}{\sqrt{T/\delta}}.
  \]
\end{proposition}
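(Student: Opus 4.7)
The plan is to Fourier-expand a smooth periodic majorant of $\charf{\norm{\cdot}\le\delta}$, apply Poisson summation to each resulting exponential sum in $n$, and bound the dual integrals using van der Corput's second derivative test. Concretely, I would fix a Schwartz $\varphi$ on $\RR$ with $\varphi\ge\charf{[-1, 1]}$ and $\hat\varphi$ of rapid decay, then periodize $\varphi(\cdot/\delta)$ to obtain a $\ZZ$-periodic majorant $\Phi_\delta\ge\charf{\norm{\cdot}\le\delta}$ whose Fourier coefficients $c_m = \delta\,\hat\varphi(\delta m)$ satisfy $c_0\asymp\delta$ and $|c_m|\ll_A\delta(1 + |m|\delta)^{-A}$ for any $A\ge 1$. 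Fixing also a smooth, nonnegative $\rho$ supported in $[1/2, 5/2]$ with $\rho\ge\charf{[1, e]}$, the count is bounded by
\[
\sum_{m\in\ZZ}c_m\sum_{n\in\ZZ}\rho(n/N)\,e(mTF(n/N)).
\]

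The $m = 0$ contribution is $c_0\sum_n\rho(n/N)\ll N\delta$, producing the first term of the claimed bound. For each $m\ne 0$ I would apply Poisson summation to rewrite the inner sum as
\[
N\sum_{k\in\ZZ}\int\rho(y)\,e(mTF(y) - kNy)\,dy.
\]
The hypothesis $|F''|\asymp 1$ and continuity force $F''$ to have constant sign on $[1/2, 5/2]$, so the phase has second derivative $\asymp mT$. Van der Corput's second derivative test then bounds each integral by $O((mT)^{-1/2})$, uniformly in $k$. Since $|F'|\ll 1$ on the support of $\rho$, the phase derivative $mTF'(y) - kN$ is $\gg|k|N$ once $|k|\gg mT/N$; repeated integration by parts in this range shows that only the $\ll mT/N + 1$ values of $k$ with $|k|\ll mT/N$ contribute appreciably. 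This yields the standard van der Corput bound
\[
\bigg|\sum_n\rho(n/N)\,e(mTF(n/N))\bigg|\ll\sqrt{mT} + \frac{N}{\sqrt{mT}}
\]
in the regime $mT\ge 1$; for the smaller modes $m < 1/T$ I would just use the trivial bound $O(N)$.

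It remains to sum over Fourier modes. The rapid decay of $c_m$ effectively truncates the sum at $|m|\le 1/\delta$, and in this range $|c_m|\ll\delta$, so
\[
\sum_{m\ne 0}|c_m|\bigg(\sqrt{mT} + \frac{N}{\sqrt{mT}}\bigg)\ll\delta\sum_{m\le 1/\delta}\bigg(\sqrt{mT} + \frac{N}{\sqrt{mT}}\bigg)\ll\sqrt{T/\delta} + \frac{N}{\sqrt{T/\delta}},
\]
using $\sum_{m\le M}m^{1/2}\asymp M^{3/2}$ and $\sum_{m\le M}m^{-1/2}\asymp M^{1/2}$ with $M = 1/\delta$. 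The trivial bound for $m < 1/T$ contributes $\ll N\delta/T\le N/\sqrt{T/\delta}$, and the tail $|m|>1/\delta$ is negligible by the decay of $c_m$. Combining with the $m = 0$ term yields the three terms $N\delta + \sqrt{T/\delta} + N/\sqrt{T/\delta}$, as claimed.

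The main bookkeeping obstacle is controlling the non-stationary contributions in the Poisson dual sum, but this rests on little more than the uniform bound $|F'|\ll 1$ on $[1/2, 5/2]$ (which follows from $F\in C^2$ on a compact interval) together with standard integration by parts. Everything else is routine Fourier-analytic accounting.
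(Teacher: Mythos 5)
Your proposal is correct and is essentially the paper's own argument: the paper likewise majorizes the indicator by a periodized smooth bump, Fourier expands, and then simply cites \cite[Corollary 8.13]{IK} (``the trivial bound after Poisson summation'') for the exponential sums $\sum_n \rho(n/N)e(mTF(n/N))$, which is exactly the van der Corput second-derivative bound you re-derive by hand. The only wrinkles lie in that re-derivation and are precisely the details packaged in the cited corollary: the tail in $k$ should be handled by the first-derivative test with monotone phase (or the $C^2$ decay of the Fourier transform) rather than ``repeated integration by parts,'' since no bound on $F'''$ is assumed, and the set of relevant $k$ is governed by the variation of $mTF'$, which is $\ll mT$ by the hypothesis on $F''$, rather than by a uniform bound $|F'|\ll 1$, which the stated hypotheses do not provide.
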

\begin{proof}
  This follows from \cite[Corollary 8.13]{IK} (the trivial bound after Poisson summation).
  To be precise, note that by Poisson summation, we have
  \[
    \charf{\norm{TF\pfrc nN}\le\delta}\le
    \sum_{m\in\ZZ}w\biggl(\delta^{-1}\biggl(TF\pfrc nN - m\biggr)\biggr)
    = \delta\hat w(0)
    + \delta\sum_{|\ell| > 0}\hat w(\delta\ell) e\biggl(\ell T F\pfrc nN\biggr) ,
  \]
  for $w$ any fixed smooth function with $\charf{[-1, 1]}\le w\le\charf{[-2, 2]}$, and then summing over $n$,
  moving the sum over $n$ inside, and applying \cite[Corollary 8.13]{IK} to the inner sum.
  
\end{proof}
\section{Proof of bounds nontrivial for $\Delta \ll H^{\frac{1}{2}}(GU)^{-O(1)}$ }\label{sec:cq5rctx73a}
\label{sec:lower}

In this section, we show the bound \eqref{eq:lower_regime_est}.
It will follow from the following bound on $\#\mc R_a$ with $\mathcal{R}_a$ satisfying the conditions of
Proposition~\ref{prop:approx_param}. To be precise, recall that
\begin{equation}
\mathcal{R}_a = \set{r_a^*(d) : d\in \mathcal{D}_a}
\end{equation}
where $\mathbb{Z}\ni r_a^*(d) \asymp R$ is defined by \eqref{eq:3}. 

\begin{proposition}\label{prop:Ra_bound_lower}
  Suppose $\Delta/(G^{\frac 14}U^{\frac 34})\ll a\ll \Delta U$, and that $\#\mc R_a\ge R/W$ for some $W\asymp GU^5$.
  Suppose also that $H/\Delta^2\ggg GU^{10}$ and that $\Delta\gg G^2U^5$.
  Then, we have that
  \[
    \#\mathcal{R}_a\ll \frac{H}{\Delta} \biggl( \frac{1}{\Delta^{\frac{1}{2}}} \frac{G^9U^{51}}{\Omega}
    + \frac{\Delta^2}{H} \frac{G^{16}U^{100}}{\Omega^{27}} + \frac{1}{\Delta} \frac{G^{16}U^{85}}{\Omega^{13}}\biggr),
  \]
  where we take $\Omega\asymp a/\Delta$.
\end{proposition}

\begin{proof}[Proof of \eqref{eq:lower_regime_est} assuming Proposition \ref{prop:Ra_bound_lower}]

  If either of the bounds $H/\Delta^2\gg GU^{10}, \Delta\gg G^2U^5$ are not satisfied,
  then \eqref{eq:lower_regime_est} follows trivially, so we may suppose we are in that regime.
  By our discussion in \S\ref{sec:final_reductions}, we have that
  \begin{equation}\label{eq:Dsum_a}
    \#\mc D \ll \frac{H}{U} + \sum_{\Delta G^{-\frac{1}{4}}U^{-\frac{3}{4}} \ll a\ll \Delta U}\#{\cal D}_a
  \end{equation}
  By Proposition \ref{prop:approx_param} and \eqref{eq:cq6ufgf398}, we have, using $a\gg \Delta/(G^{\frac{1}{4}}U^{\frac{3}{4}}) $, that
  \[
    \#\mc D_a \ll \#\mathcal{R}_a
    \biggl(1 + \biggl(\frac{\Delta}{a}\biggr)^{\frac{8}{3}} \frac{1}{G^{\frac{2}{3}}} \biggr) \ll \#\mathcal{R}_a U^2.
  \]
  In particular, the contribution to the right-hand side of \eqref{eq:Dsum_a} from those $a$ with
  $\#\mc R_a\ll R/W$ is
  \begin{equation}\label{eq:12}
    \ll \frac{1}{W}  H \biggl( \biggl(\frac{U\Delta}{\Delta}\biggr)^4G
    + \biggl(\frac{U\Delta}{\Delta}\biggr)^{\frac{4}{3}}G^{\frac{1}{3}}\biggr)\ll \frac{GU^4}{W}H
  \end{equation}
  If $W$ is some sufficiently large multiple of $GU^5$, \eqref{eq:12} is $\le \frac{1}{2}H/U\le \frac{1}{2}\#\mathcal{D}$.
  Taking such a choice of $W$ implies
  \[
    \#\mc D \ll \frac{H}{U} + \sum_{\substack{ \Delta G^{-\frac{1}{4}}U^{-\frac{3}{4}}\ll a\ll \Delta U\\ \#{\cal R}_a\ge R/W}}\#{\cal D}_a
    \ll \frac{H}{U} + U^2
    \sum_{\substack{ \Delta G^{-\frac{1}{4}}U^{-\frac{3}{4}}\ll a\ll \Delta U \\ \#{\cal R}_a\ge R/W}}\#{\cal R}_a.
  \]
  The bound \eqref{eq:lower_regime_est} then follows upon applying Proposition~\ref{prop:Ra_bound_lower} and summing over $a$.
\end{proof} 

The rest of this section is dedicated to the proof of Proposition~\ref{prop:Ra_bound_lower}.
Throughout this section, we suppose that
\begin{equation}\label{eq:cq6s1m0ltz}
   \frac{H}{\Delta^2}\gg GU^{10},  \qquad  \Delta \ggg G^2U^{5}.   
\end{equation}
Note that $\mathcal{R}_a$ is contained in an interval $I$ of length $\ll R$.
This, along with the fact that $\#\mathcal{R}_a\gg R/W$ implies that there exist  positive integers
$\ell_1 < \ell_2\ll W$ such that
\begin{equation}\label{eq:13}
  \#\mc R_a(\ell_1, \ell_2)\gg \#\mathcal{R}_a/W^2,
\end{equation}
where we write $\mc R_a(\ell_1, \ell_2) = \set{r: r, r + \ell_1, r + \ell_2\text{ consecutive elements of }\mc R_a }$.
Fix some such choice of $\ell_1, \ell_2$ from now on (and so dependencies on them may be omitted
when clear). 

Consider some $r\in\mc R_a(\ell_1, \ell_2)$. Write $d = d_a^*(r)$ and let $b_0 = b_0(r)$, $ v = v(r)\in \frac{1}{\ell_1} \mathbb{Z} $ be defined as
\begin{equation}\label{eq:10}
  b_0(r) := \frac{1}{\ell_1}(d_a^*(r + \ell_1) - d), \qquad v(r) := d_a^*(r + \ell_2) - (d + \ell_2b_0(r)).
\end{equation}
For clarity, let us get a handle on the scales involved here. First, note that
\begin{equation}\label{eq:b0_approx1} 
  b_0 = \frac{1}{\ell_1}(\tilde d_a(r + \ell_1) - \tilde d_a(r))
  + O\biggl(\frac{1}{\ell_1} \frac{\Delta}{G}\frac{\Delta^3}{a^3}\biggr) = \tilde b_a(r)
  + O\biggl(\Delta \frac{1}{\ell_1G\Omega^3}\biggr),
\end{equation}
where
\begin{equation}
  \tilde b_a(r) := \frac{1}{\ell_1}(\tilde d_a(r + \ell_1) - \tilde d_a(r)).
\end{equation}
By \eqref{eq:tilded_ests} , with
\begin{equation}\label{eq:1}
  B := \frac{D}{R}\asymp \frac{\Delta^2}{G}\pfrc{\Delta}{a}^3 = \Delta^2 \frac{1}{G\Omega^3},
\end{equation}
we have 
\begin{equation}
  \label{eq:tildeb_est}
  \tilde b_a^{(j)}(\rho)\asymp \frac{B}{R^j}
\end{equation}
for all $j\ge 0$. It may be helpful to record that \eqref{eq:b0_approx1}
is equivalent to 
\begin{equation}
  \label{eq:b0_err_mult}
  b_0 = \tilde b_a(r) \biggl(1 + O\pfrc{1}{\ell_1\Delta}\biggr).
\end{equation}
Furthermore, rearranging \eqref{eq:10}, applying \eqref{eq:b0_approx1} and
\eqref{eq:d_approx}, and then Taylor expanding $\tilde d_a$ yields that
\begin{multline}\label{eq:v_upper}
  v = d_a^*(r + \ell_2) - d_a^*(r + \ell_1) - (\ell_2 - \ell_1)b_0\\ 
  =  \tilde d_a(r+\ell_2)-\frac{\ell_2}{\ell_1}\tilde d_a(r+\ell_1)+\frac{\ell_2-\ell_1}{\ell_1}\tilde d_a(r)
  +  O \biggl( \Delta \frac{\ell_2 }{\ell_1G \Omega^3}\biggr)\\
  \ll \Delta\biggl(\frac{H}{R^2}\ell_2(\ell_2 - \ell_1) + \frac{\ell_2}{\ell_1G\Omega^3} \biggr)
  \ll  \Delta\biggl(\frac{\Delta^2U^{10}}{H\Omega^6} + \frac{U^5}{\Omega^3}\biggr)\ll \Delta \frac{U^5}{\Omega^3}.
\end{multline}
We note also the crude bound $v\lll B$. This will only be relevant in getting a handle
on terms in Taylor expansions.

We will show Proposition~\ref{prop:Ra_bound_lower} in this section with a new differencing identity which
implies restrictions on $v$ to more narrow ranges than \eqref{eq:v_upper}. With another simpler differencing
identity, we are reduced to counting problems which are dealt with using the results of \S\ref{sec:minor_lemmas}.

\subsection{Setup for some new differencings}\label{sec:cq6v97w1io}
In this subsection, we introduce some differencing identities.
Write 
\begin{equation}\label{eq:2}
  S_{a, b}(d) = -(b - a)\frac{X}{d^2} + (b + a)\frac{X}{(d + a)^2} - (b + a)\frac{X}{(d + b)^2} + (b - a)\frac{X}{(d + a + b)^2}.
\end{equation}
We remark that this differencing identity was introduced by Filaseta and Trifonov~\cite{MR1171549}
to show Lemma~\ref{lem:Da_space}. Our use of this identity will be quite different from its
original application.

From the Taylor expansion 
\[
  \frac{1}{(d + x)^2} = \frac{1}{d^2}\biggl(1 - 2\frac{x}{d} + 3\frac{x^2}{d^2} - \dots\biggr),
\]
it can be checked that
\begin{multline}\label{eq:S_taylor}
  S_{a, b}(d) = \frac{X}{d^5}\biggl(4(-ab^3 + a^3b)
  + 5\frac{2ab^4 + 2a^2b^3 - 2a^3b^2 - 2a^4b}{d}\\
  + 6\frac{-3ab^5 - 5a^2b^4 + 5a^4b^2 + 3a^5b}{d^2}
  + \dots\biggr).
\end{multline}
For technical reasons (slight issues with the $a^3b$ term above),
we will work with the following modified version of $S_{a, b}(d)$, defined as 
\begin{equation}\label{eq:6}
  \hat S_{a, b}(d) := S_{a, b}(d) - (R_a(d) - R_a(d + b)).
\end{equation}
It can be checked that
\begin{equation}\label{eq:Sh_exp}
  \hat S_{a, b}(d) = \frac{X}{d^5} \biggl(-4ab^3 + \frac{10ab^4 + 10a^2b^3}{d} -
  \frac{18ab^5 + 30a^2b^4 + 20a^3b^3}{d^2} + \dots\biggr).
\end{equation}
If $d, d + b\in{\cal D}_a$, then since
$X/d^2, X/(d+ a)^2, X/(d + b)^2, X/(d + b + a)^2\in \mathbb{Z} + O(1/(H\Delta^2))$ and
$\norm{R_a(d)}, \norm{R_a(d + b)}\ll 1/\Delta$, we have that
\begin{equation}\label{eq:15}
  \norm*{\hat S_{a, b}(d)} \ll \frac{WB}{H\Delta^2} + \frac{1}{\Delta}\ll \frac{1}{\Delta}, 
\end{equation}
where the last bound follows from \eqref{eq:cq6s1m0ltz}.
In this same situation, consider
\begin{equation}\label{eq:7}
  F_{a, b}(d) := F_a(d) - F_a(d + b).
\end{equation}
We have the expansion
\begin{equation} \label{eq:second_diff_exp}
  F_{a, b}(d) = \frac{X}{d^4}\biggl(6ab - 12\frac{a^2b + ab^2}{d}
  + 10\frac{2a^3b + 3a^2b^2 + 2ab^3}{d^2}
  + \dots\biggr).
\end{equation}
Before proceeding with the proof of Proposition~\ref{prop:Ra_bound_lower} in \S\ref{sec:cq6uvm8sc0}, \ref{sec:cq6vspz820}, we will 
outline the remainder of this section and make some remarks on what we just did.

In \S\ref{sec:cq6uvm8sc0}, we deal with the case of the defect $v$ being small.
To be precise, we deal with the case of $v\ll (\Delta^{1/2} + \Delta^3/H) (GU)^{O(1)}$ (though the case of
$v\lll \Delta$ also follows from the results of \S\ref{sec:cq6uvm8sc0}). Dealing with this case will not require the
elaborate new differencing $\Upsilon$ we will introduce in \S\ref{sec:cq6vspz820}.
We instead consider the difference $\Qoppa := \ell_1F_{a, \ell_2b_0 + v}(d) - \ell_2F_{a, \ell_1b_0}(d)  $ which has
been engineered to straightforwardly cancel the leading term in \eqref{eq:second_diff_exp} if $v = 0$ and is
particularly effective when $v$ is small.

In \S\ref{sec:cq6vspz820}, we deal with the case of $v$ being of ``generic'' size, near the upper bound
$O(\Delta (GU)^{O(1)})$ of \eqref{eq:v_upper}. It is here where we make use of $\Upsilon$, a differencing defined
in \S\ref{sec:cq6vspz820}, which has been engineered to remove the monomials
\begin{equation}
  \frac{Xab_0^3}{d^5}, \frac{Xab_0^2v}{d^5}, \frac{Xab_0^4}{d^6}
\end{equation}
from a linear combination of $\hat S_{a, b_0\ell_1}(d), \hat S_{a, b_0\ell_2 + v}(d), \hat S_{a, b_0(\ell_2 - \ell_1) + v}(d + \ell_1b_0)$.

It can be checked that we only have $2$ degrees of freedom, so it might seem we would only
be able to find coefficients that eliminate two of these. Remarkably, the coefficients
that eliminate the first two monomials also eliminate the third.

We do not have a good explanation for this minor miracle. A rough computation suggests
that a combination of the bracket polynomial methods of \S\ref{sec:upper}, \ref{sec:cq6v6y3nkr} along with the methods
of this section would be sufficient if such a miracle did not occur, but we have not
checked this carefully for we do not have an example where this is so.

\subsection{Small defects}\label{sec:cq6uvm8sc0}

The key differencing we will use in this section is
\begin{equation}\label{eq:9}
  \ZZ + O\pfrc{W}{H\Delta^2}\ni\Qoppa := \ell_1 F_{a, \ell_2b_0 + v}(d) - \ell_2 F_{a, \ell_1b_0}(d).
\end{equation}
Using \eqref{eq:second_diff_exp} along with \eqref{eq:1}, it can be seen that
\begin{equation}\label{eq:qoppa_eval}
  \Qoppa = 6\ell_1\frac{Xav}{d^4} - 12\ell_1\ell_2(\ell_2 - \ell_1)\frac{Xab_0^2}{d^5}
  + O \biggl( W^2\frac{Xab_0 v}{d^5} + W^3\frac{Xab_0^3}{d^6} \biggr).
\end{equation}
We record that
\begin{equation}
  \label{eq:qoppa_second_size}
  12\ell_1\ell_2(\ell_2 - \ell_1)\frac{Xab_0^2}{d^5}\asymp \ell_1\ell_2(\ell_2 - \ell_1)\frac{1}{G\Omega^5}
  \ll \ell_1\frac{GU^{10}}{\Omega^5}.
\end{equation}
We also record that the error terms in \eqref{eq:qoppa_eval} satisfy
\begin{align}\label{align:cq6yhxqmud}
  W^2 \frac{Xab_0 v}{d^5}\ll \frac{\Delta}{H} \frac{H|v|}{\Delta^3}\frac{G^2U^{10}}{\Omega^2}
  \ll \frac{1}{\Delta}\frac{G^2U^{15}}{\Omega^5},
  &&  W^3 \frac{Xab_0^3}{d^6}\asymp \frac{\Delta}{H} \frac{GU^{15}}{\Omega^8} 
\end{align}
In this section, we will show the following pair of results. The methods of this
subsection are capable of resolving all but the case of $v\gg \Delta(GU)^{-O(1)}$.
\begin{proposition}\label{prop:smallv}
  For any
  \begin{equation*}
    \frac{\Delta^3}{H} \ll V_+\ll \frac{\Delta^3}{H} \frac{U^{10}}{\Omega^6},
  \end{equation*}
  we have
  \begin{equation}\label{eq:cq6uvm6vi6}
    \#\set{r\in{\cal R}_a(\ell_1, \ell_2) : 0 \le |v(r)| \le V_+}\\  \ll
    \frac{H}{\Delta}\biggl(  \frac{\Delta^2}{H} \frac{G^4U^{35}}{\Omega^{8}} + \frac{1}{\Delta^{\frac{1}{2}}}
    \frac{G^{\frac{9}{2}}U^{25}}{\Omega^{2}} \biggr)
    \frac{H V_+}{\Delta^3}.
  \end{equation}
  Furthermore, there exists $C > 0$ such that for $V_+\ll \Delta U^5/\Omega^3 $, we have
  \begin{equation}\label{eq:cq6uvm8sc0}
    \#\set{r\in \mathcal{R}_a(\ell_1, \ell_2) :  C\frac{\Delta^3}{H} \frac{U^{10}}{\Omega^6} \le |v(r)| \le V_+} \ll \frac{H V_+}{\Delta^2} \frac{G^5U^{20}}{\Omega^2}.
  \end{equation}
\end{proposition}
\begin{proof}[Proof of Proposition~\ref{prop:smallv}]

  We begin with the proof of \eqref{eq:cq6uvm6vi6}.
  Consider some $r$ such that $|v(r)|\le V_+$. By \eqref{eq:qoppa_eval} and \eqref{eq:qoppa_second_size}, we have 
  \[
    |\Qoppa| \ll G^2U^5\Omega\pfrc{HV_+}{\Delta^3} + \frac{G^2U^{15}}{\Omega^5}.
  \]
  By \eqref{eq:d_approx}, \eqref{eq:cq6ufgf398}, and \eqref{eq:b0_err_mult}, we have
  \[
    \Qoppa = 6\ell_1\frac{Xa v}{\tilde d_a(r)^4}
    - 12\ell_1\ell_2(\ell_2 - \ell_1)\frac{Xa\tilde b_a(r)^2}{\tilde d_a(r)^5}
    + O\biggl(W^2 \frac{XaBV_+}{D^5} + \frac{1}{\Delta}\frac{G^2U^{15}}{\Omega^5}\biggr),
  \]
  where the new error term is ultimately due to \eqref{eq:b0_err_mult} from the approximation
  of $b_0$ in the second term. The above error can be simplified somewhat crudely with
  \eqref{eq:cq6ufgf398} so that we have 
  \begin{equation}
    \Qoppa = 6\ell_1\frac{Xa v}{\tilde d_a(r)^4}
    - 12\ell_1\ell_2(\ell_2 - \ell_1)\frac{Xa\tilde b_a(r)^2}{\tilde d_a(r)^5}
    + O\biggl(\frac{1}{\Delta}\frac{G^2U^{20}}{\Omega^5}\biggr).
\end{equation}
  That $\norm{\Qoppa}\ll \frac{W}{H\Delta^2}$ implies that there exists some integer 
  \begin{equation}\label{eq:14}
    f \ll  G^2U^5\Omega \frac{H V_+}{\Delta^3} +  \frac{G^2U^{15}}{\Omega^5}
    \ll \frac{G^2U^{15}}{\Omega^5} 
  \end{equation}
  such that
  \[
    6\ell_1\frac{Xa v}{\tilde d_a(r)^4} =
    f + 12\ell_1\ell_2(\ell_2 - \ell_1)\frac{Xa\tilde b_a(r)^2}{\tilde d_a(r)^5}
    + O\biggl(\frac{1}{\Delta}\frac{G^2U^{20}}{\Omega^5}\biggr),
  \]
  Multiplying by $\frac{\tilde d_a(r)^4}{6\ell_1Xa}$ to solve for $v$, we obtain that
  \[
    v = \frac{\tilde d_a(r)^4}{6\ell_1Xa}\biggl(f
    + 12\ell_1\ell_2(\ell_2 - \ell_1)\frac{Xa\tilde b_a(r)^2}{\tilde d_a(r)^5}\biggr)
    + O\biggl(\frac{\Delta^2}{H}\frac{1}{\ell_1} \frac{GU^{20}}{\Omega^6}\biggr).
  \]
  Since $v\in \frac{1}{\ell_1}\mathbb{Z}$, with
  \[
    \phi_f(r) = \frac{\tilde d_a(r)^4}{6Xa}
    \biggl(f + 12\ell_1\ell_2(\ell_2 - \ell_1)\frac{Xa\tilde b_a(r)^2}{\tilde d_a(r)^5}\biggr),
  \]
  we have
  \[
    \norm{\phi_f(r)} \ll \frac{\Delta^2}{H} \frac{GU^{20}}{\Omega^6}.
  \]
  It follows that
  \begin{equation}
    \#\set{r\in{\cal R}_a(\ell_1, \ell_2) :  |v(r)|\le V_+} \ll
    \sum_{|f|\ll \frac{G^2U^{15}}{\Omega^5} }
    \sum_{r\asymp R} \charf{\norm{\phi_f(r)}\ll \frac{\Delta^2}{H}\frac{GU^{20}}{\Omega^6} }.
  \end{equation}
  With
  \[
    T = \frac{\Delta^3}{H}\max \biggl( \frac{|f|}{G\Omega}, \ell_1\ell_2(\ell_2 - \ell_1) \frac{1}{G^2\Omega^6} \biggr),
  \]
  it can be checked that for $\rho\asymp R$, we have
  \[
    |\phi_f'(\rho)| + R|\phi_f''(\rho)|\asymp \frac{T}{R}
  \]
  (since up to lower order terms, $\phi_f$ is the sum of two monomials).
  Crudely, we have that
  \begin{equation}\label{eq:cq6wp32tny}
    \frac{\Delta^3}{H} \frac{1}{G^2\Omega^6} \ll T \ll \frac{\Delta^3}{H} \frac{GU^{15}}{\Omega^6}.
  \end{equation}
  Therefore, the conditions of Lemma~\ref{lem:crude_stat_est} are satisfied
  (with $T, X, \delta, \phi$ taken to be $T, R, \frac{\Delta^2}{H}\frac{GU^{20}}{\Omega^6}, \phi_f$ respectively)
  and so we have (using the bounds \eqref{eq:cq6wp32tny} and \eqref{eq:cq6s1m0ltz} to simplify)
  \begin{multline}
    \sum_{r\asymp R} \charf{\norm{\phi_f(r)}\ll
      \frac{\Delta^2}{H}\frac{GU^{20}}{\Omega^6}} 
    \ll R \biggl(  \frac{\Delta^2}{H}\frac{GU^{20}}{\Omega^6}
    + \frac{\frac{\Delta}{H^{1/2}}\frac{G^{1/2}U^{10}}{\Omega^3}}{\frac{\Delta^{3/2}}{H^{1/2}}\frac{1}{G\Omega^3}}\biggr)
    + T + 1\\
    \ll  \frac{H}{\Delta} G\Omega^3 \biggl(\frac{\Delta^2}{H}\frac{GU^{20}}{\Omega^6}
    + \frac{1}{\Delta^{\frac{1}{2}}}G^{\frac{3}{2}}U^{10}\biggr)
    + \frac{\Delta^3}{H} \frac{GU^{15}}{\Omega^6}+ 1 \\ 
    \ll \frac{H}{\Delta}\biggl(  \frac{\Delta^2}{H} \frac{G^2U^{20}}{\Omega^3}
    + \frac{1}{\Delta^{\frac{1}{2}}}G^{\frac{5}{2}}U^{10}\Omega^3 \biggr).
  \end{multline}
  Then, \eqref{eq:cq6uvm6vi6} follows after summing over $|f|\ll \frac{G^2U^{15}}{\Omega^5}$ (recalling \eqref{eq:14}). 

  It now remains to show \eqref{eq:cq6uvm8sc0}. Suppose now that $|v(r)|\ggg  (\Delta^3/H)(U^{10}/\Omega^6)$.
  Then, we have that
  \begin{equation}
    |\Qoppa| \asymp \ell_1G\Omega \frac{H |v|}{\Delta^3}.
  \end{equation}
  We will bound the contribution of $|v(r)| \sim V$ for
  \begin{equation}
    C \frac{\Delta^3}{H} \frac{U^{10}}{\Omega^6} \le  V\ll V_+
  \end{equation}
  with $C$ sufficiently large. We shall also suppose that
  \begin{equation}
    V_+ \ll \Delta \frac{U^5}{\Omega^3}
  \end{equation}
  in light of the bound \eqref{eq:v_upper}. 
  Then, with $f$ as before, we have that
  \begin{equation}
    |f|\asymp \ell_1G\Omega \frac{H V}{\Delta^3}.
  \end{equation}
  
  Also, with $\phi_f$ as before, we now have that for $C$ sufficiently large\footnote{To be clear, the implied constants will not depend on the choice of $C$, so long as it is sufficiently large}, $|\phi_f'(\rho)|\asymp T/R$ for $\rho\asymp R$, now with
  \begin{equation}
    \frac{\Delta^3}{H} \ll T = \frac{D^4}{Xa}|f| \ll \frac{\Delta^3}{H} GU^5 \frac{HV}{\Delta^3}.
  \end{equation}
  Therefore, by Lemma \ref{lem:crude_frac_bound}, since $T\gg \frac{\Delta^2}{H}\frac{GU^{15}}{\Omega^5}$, the contribution to \eqref{eq:cq6uvm6vi6}
  of $|v(r)|\sim V \ll \Delta U^5/\Omega^3$ (which we may suppose by \eqref{eq:v_upper}) is at most
  \begin{multline}
    \sum_{r\asymp R} \charf{\norm{\phi_f(r)}\ll \frac{\Delta^2}{H}\frac{GU^{15}}{\Omega^6}}
    \ll R \frac{\Delta^2}{H}\frac{GU^{15}}{\Omega^6} + T + 1
    \ll \frac{H}{\Delta} \biggl( \frac{\Delta^2}{H} \frac{G^2U^{15}}{\Omega^3}
    + \frac{\Delta^4}{H^2} GU^5 \frac{H V}{\Delta^3} + \frac{\Delta}{H} \biggr)\\
    \ll \frac{H}{\Delta} \frac{\Delta^2}{H}\biggl( \frac{G^2U^{15}}{\Omega^3} + \frac{G^3U^{10}}{\Omega^3} \biggr)
    \ll \frac{H}{\Delta} \frac{\Delta^2}{H} \frac{G^3U^{15}}{\Omega^3}.
  \end{multline}
  Summing over scales $V\ll V_+$ and $|f|\asymp G^2U^5\Omega \frac{H V}{\Delta^3}$ yields that

  \begin{equation}
    \#\set{r\in \mathcal{R}_a(\ell_1, \ell_2) :  C\frac{\Delta^3}{H} \frac{U^{10}}{\Omega^6} \le |v(r)| \le V_+}
    \ll \frac{H}{\Delta} \frac{\Delta^2}{H} \frac{G^3U^{15}}{\Omega^3} G^2U^5\Omega \frac{H V}{\Delta^3}
    \ll \frac{H V_+}{\Delta^2} \frac{G^5U^{20}}{\Omega^2},
  \end{equation}
  as desired.
\end{proof}

\subsection{Large defects}\label{sec:cq6vspz820}

In this section, we will show bounds on the size of the rest of $\#\mathcal{R}_a(\ell_1, \ell_2)$ by showing the following
proposition.
\begin{proposition}\label{proposition:cq6v970jxq}
  There exists $C > 0$ such that
  \begin{multline}
    \#\set{r\in \mathcal{R}_a(\ell_1, \ell_2)  : |v(r)|\ge
      C \biggl( \frac{\Delta^3}{H}\frac{G^{\frac{5}{2}}U^{\frac{45}{2}}}{\Omega^6}
      + \Delta^{\frac{1}{2}}G^2U^{10}\Omega \biggr)}\\ 
    \ll \frac{H}{\Delta}\biggl(\frac{1}{\Delta}\frac{G^{14}U^{75}}{\Omega^{13}}
    + \frac{\Delta^2}{H} \frac{G^{14}U^{90}}{\Omega^{27}}\biggr).
\end{multline}
\end{proposition}

The range of $v $ in this section will imply that $|p_1(v)| \ggg |p_2(v)|/d $. 

Before we prove Proposition~\ref{proposition:cq6v970jxq}, we will introduce one final
new differencing.
Recall the situation prior to \S\ref{sec:cq6v97w1io}: we were considering
$r\in {\cal R}_a(\ell_1, \ell_2), d = d_a^*(r)\in {\cal D}_a$ and $b_0, v\in \frac{1}{\ell_1}\ZZ$ such that $d + \ell_1b_0, d + \ell_2 b_0 + v\in{\cal D}_a$.

From \eqref{eq:Sh_exp}, we have that
\begin{multline}
  \label{eq:Shtrunc_pre}
  \hat S_{a, b}(d) = \frac{X}{d^5}\biggl(\biggl(-4a + 10\frac{a^2}{d}\biggr)b^3
  + 10\frac{ab^4}{d}\biggr) +
  O\biggl(\frac{Xab^5}{d^7}\biggr)\\
  = \frac{X}{d^5}\biggl(-4a + 10\frac{a^2}{d}\biggr)\biggl(b^3 - \frac{5}{2}\cdot \frac{b^4}{d}\biggr) + O\biggl(\frac{Xab^5}{d^7}\biggr). 
\end{multline}
For $b\in\set{\ell_1b_0, \ell_2 b_0, (\ell_2 - \ell_1)b_0, \ell_2b_0 + v, (\ell_2 - \ell_1)b_0 + v}$, we have that
$b\ll WB \asymp \Delta^2 U^5/\Omega^3$, and so the remainder in \eqref{eq:Shtrunc_pre} satisfies
\begin{equation}
  \label{eq:erroreval_trunc}
  \frac{Xab^5}{d^7} \ll \frac{\Delta^4}{H^2} \frac{GU^{25}}{\Omega^{14}}.
\end{equation}
Now, consider the quantity
\begin{equation}\label{eq:8}
  \Upsilon := \ell_2^2(\ell_2 - \ell_1)^2\hat S_{a, b_0\ell_1}(d)
  - \ell_1^2(\ell_2 - \ell_1)^2\hat S_{a, b_0\ell_2 + v}(d)
  + \ell_1^2\ell_2^2\hat S_{a, b_0(\ell_2 - \ell_1) + v}(d + \ell_1b_0).
\end{equation}
We have that $\norm{\Upsilon}\ll W^4/\Delta $ since $\norm*{\hat S_{a, b_0\ell_1}(d)}\ll 1/\Delta$, recalling \eqref{eq:15}.
Also, from \eqref{eq:Shtrunc_pre} and the expansion
\[
  \frac{X}{(d + \ell_1b_0)^5} = \frac{X}{d^5}\biggl(1 - 5\frac{\ell_1b_0}{d} + O\pfrc{W^2b_0^2}{d^2}\biggr),
\]
we have that
\begin{multline}\label{eq:Ups_exp}
  \hat S_{a, b_0(\ell_2 - \ell_1) + v}(d + \ell_1b_0)\\
  = \frac{X}{(d + \ell_1b_0)^5}\biggl(-4a + 10\frac{a^2}{d}\biggr)
  \cdot\biggl((b_0(\ell_2 - \ell_1) + v)^3 - \frac{5}{2}
  \cdot \frac{((\ell_2 - \ell_1)b_0 + v)^4}{d}\biggr)
  + O \biggl( \frac{\Delta^4}{H^2} \frac{GU^{25}}{\Omega^{14}} \biggr)\\
  = \frac{X}{d^5}\biggl(-4a + 10\frac{a^2}{d}\biggr)\biggl((b_0(\ell_2 - \ell_1) + v)^3 -
  \frac{5}{2}\cdot \frac{((\ell_2 - \ell_1)b_0 + v)^4 + 2\ell_1b_0((\ell_2 - \ell_1)b_0 + v)^3}{d}\biggr)\\
  + O\biggl( \frac{\Delta^4}{H^2} \frac{GU^{25}}{\Omega^{14}}\biggr).
\end{multline}
Then, noting that
\begin{multline}\label{multline:cq5rr17ch5}
  \ell_2^2(\ell_2 - \ell_1)^2 (b_0\ell_1)^3
  - \ell_1^2(\ell_2 - \ell_1)^2(b_0\ell_2 + v)^3
  + \ell_1^2\ell_2^2(b_0(\ell_2 - \ell_1) + v)^3\\
  = 3\ell_1^3\ell_2(\ell_2 - \ell_1)b_0v^2 + \ell_1^3(2\ell_2 - \ell_1)v^3
  =: p_1(v)
\end{multline}
and
\begin{multline}\label{multline:cq5rr2avlb}
  -\frac{5}{2}(\ell_2^2(\ell_2 - \ell_1)^2 (b_0\ell_1)^4
  - \ell_1^2(\ell_2 - \ell_1)^2(b_0\ell_2 + v)^4
  + \ell_1^2\ell_2^2((b_0(\ell_2 - \ell_1) + v)^4 + 2\ell_1b_0((\ell_2 - \ell_1)b_0 + v)^3))\\
  = -5\ell_1^3\ell_2^2(\ell_2 - \ell_1)^2b_0^3v - 15\ell_1^3\ell_2^2(\ell_2 - \ell_1)b_0^2v^2
  - 5\ell_1^3\ell_2(3\ell_2 - 2\ell_1)b_0v^3 - \frac{5}{2}\ell_1^3(2\ell_2 - \ell_1)v^4
  =: p_2(v),
\end{multline}
we obtain that
\begin{equation}\label{eq:Ups_eval}
  \Upsilon = \frac{X}{d^5}\biggl(-4a + 10\frac{a^2}{d}\biggr)\biggl(p_1(v) + \frac{p_2(v)}{d}\biggr) +
  O \biggl( \frac{\Delta^4}{H^2} \frac{G^5U^{45}}{\Omega^{14}}\biggr).
\end{equation}

By \eqref{eq:1}, \eqref{eq:v_upper}, and \eqref{eq:cq6s1m0ltz}, we have $|b_0|\ggg |v| $. Therefore, $p_1(v)$ and $p_2(v)$ are, in the
ranges we are concerned, dominated in absolute value by the terms $3\ell_1^3\ell_2(\ell_2 - \ell_1)b_0v^2$, $5\ell_1^3\ell_2^2(\ell_2 - \ell_1)^2b_0^3v$
respectively.
The range of $v$ in which $|p_1(v)|\ll |{p_2(v)}/d|$ was disposed of in \S\ref{sec:cq6uvm8sc0} (without appeal to
the differencing $\Upsilon$). In this section, we will be dealing with the remaining cases.

\begin{proof}[Proof of Proposition \ref{proposition:cq6v970jxq}]
  Fix some choice of $r\in \mathcal{R}_a(\ell_1, \ell_2)$ satisfying the constraints on elements of the set
  in Proposition~\ref{proposition:cq6v970jxq}.
  Note that the lower bound on $v(r)$ implies that
  \begin{equation}\label{eq:cq6v94rqzi}
    \biggl|\frac{p_1(v)}{p_2(v)/d}\biggr| \asymp \frac{H\Delta}{\ell_2(\ell_2 - \ell_1) |b_0|^2}|v|
    \gg \frac{1}{G^2U^{10}} G^2\Omega^6 \frac{H v}{\Delta^3}\gg C.
  \end{equation}
  Therefore, for $C$ sufficiently large, by \eqref{eq:Ups_eval} and the bound $v\ll \Delta U^5/\Omega^3$
  from \eqref{eq:v_upper}, we have that 
  \begin{equation}\label{multline:cq6utntrin}
    |\Upsilon| \asymp X\frac{a}{d^5}|p_1(v)| + O \biggl( \frac{\Delta^4}{H^2}\frac{G^5U^{45}}{\Omega^{14}} \biggr) 
    \asymp \frac{Xa}{d^5}\ell_1^3\ell_2 (\ell_2 - \ell_1) |b_0|v^2
    + O\biggl(\frac{\Delta^4}{H^2}\frac{G^5U^{45}}{\Omega^{14}}\biggr).
  \end{equation}
  where we have crudely combined \eqref{eq:cq6ufgf398}, \eqref{eq:cq6s1m0ltz} for the final bound.
  Also, 
  \begin{equation}\label{eq:cq6yhukdfb}
    \frac{X a}{d^5}\ell_1^3\ell_2(\ell_2 - \ell_1)b_0 v^2\gg
    C^2\max \biggl( \frac{\Delta^4}{H^2}\frac{G^5U^{45}}{\Omega^{14}}, \frac{1}{\Delta}G^4U^{20}\biggr).
  \end{equation}
  This along with the fact that $\norm{\Upsilon}\ll W^4/\Delta\asymp G^4U^{20}/\Delta$ implies that there exists a
  nonzero integer $s$ with
  \begin{equation}\label{eq:17}
    \Upsilon = s + O \biggl( \frac{G^4U^{20}}{\Delta} \biggr). 
  \end{equation}
  In particular, 
  \begin{equation}
    \frac{X}{d^5} \biggl( -4a + 10 \frac{a^2}{d} \biggr)
    \biggl( p_1(v) + \frac{p_2(v)}{d} \biggr)
    = s + O \biggl( \frac{\Delta^4}{H^2} \frac{G^5U^{45}}{\Omega^{14}} + \frac{1}{\Delta} G^4U^{20} \biggr).
  \end{equation}
  We also record that by \eqref{multline:cq6utntrin} and the fact that $s$ is nonzero for $C$ sufficiently large
  (which follows from \eqref{multline:cq6utntrin}, \eqref{eq:cq6yhukdfb}, and \eqref{eq:17}), we have
  \begin{equation}\label{eq:cq6v988b6h}
    1\le |s|\ll \frac{Xa}{D^5}\ell_1^3\ell_2(\ell_2 - \ell_1)B \biggl(\Delta\frac{U^5}{\Omega^3}\biggr)^2
    \asymp \ell_1^3\ell_2(\ell_2 - \ell_1)\frac{U^{10}}{\Omega^8}\ll \frac{G^5U^{35}}{\Omega^8}.
  \end{equation}
  Now, let
  \begin{align*}
    \tilde p_1(v, r) &= 3\ell_1^3\ell_2(\ell_2 - \ell_1)\tilde b_a(r)v^2 + \ell_1^3(2\ell_2 - \ell_1)v^3,\\
    \tilde p_2(v, r) &= -5\ell_1^3\ell_2^2(\ell_2 - \ell_1)^2\tilde b_a(r)^3v
                    - 15\ell_1^3\ell_2^2(\ell_2 - \ell_1)\tilde b_a(r)^2v^2\\
                  &\hspace{1cm}- 5\ell_1^3\ell_2(3\ell_2 - 2\ell_1)\tilde b_a(r)v^3
                    - \frac{5}{2}\ell_1^3(2\ell_2 - \ell_1)v^4.
  \end{align*}
  By our approximations \eqref{eq:d_approx}, \eqref{eq:b0_err_mult} for $d, b_0$ respectively, we obtain that
  \begin{multline}\label{eq:cq6v987zct}
    \Sampi(r) := \frac{X}{\tilde d_a(r)^5}\biggl(-4a + 10\frac{a^2}{\tilde d_a(r)}\biggr)\biggl(\tilde p_1(v, r)
    + \frac{\tilde p_2(v, r)}{\tilde d_a(r)}\biggr) \\ 
    = s + O \biggl( \frac{\Delta^4}{H^2} \frac{G^5U^{45}}{\Omega^{14}}
    + \frac{1}{\Delta} G^4U^{20} + \frac{\ell_1^2\ell_2(\ell_2 - \ell_1)v^2}{\Delta^3\Omega^2} \biggr).
  \end{multline}
  We do not solve \eqref{eq:cq6v987zct} for $r$. Instead, we use it to constrain the possible
  values of $v$.
  Using $B\asymp \Delta^2/(G\Omega^3)$, $D=H\Delta$, the crude bound $|v|\lll B$, and \eqref{eq:cq6s1m0ltz}, expanding out yields
  that 
  \begin{multline}\label{eq:sampi_parabola_explicit}
    \Sampi(r)
    =
    \frac{3\ell_1^3\ell_2(\ell_2-\ell_1)}{a^2}v^2 \\
    + O\biggl(
    \frac{\ell_1^2\ell_2(\ell_2-\ell_1)v^2}{\Delta^3\Omega^2}
    + \frac{\ell_1^3\ell_2^2(\ell_2-\ell_1)v^2}{HG\Delta\Omega^5}
    + \frac{G\Omega}{\Delta^4}\ell_1^3\ell_2|v|^3
    + \frac{\ell_1^3\ell_2^2(\ell_2-\ell_1)^2\Delta}{HG^2\Omega^8}|v|
    \biggr).
  \end{multline}
  Combining \eqref{eq:sampi_parabola_explicit} with \eqref{eq:cq6v987zct}, and using the
  large-defect lower bound on $|v|$, gives
  \begin{equation}\label{eq:cq6waeioej}
    |v|\asymp
    \biggl(\frac{D^5|s|}{XAB\ell_1^3\ell_2(\ell_2-\ell_1)}\biggr)^{1/2}
    =: V_s.
  \end{equation}
  In particular,
  \[
    V_s
    =
    \Delta\Omega
    \biggl(\frac{|s|}{\ell_1^3\ell_2(\ell_2-\ell_1)}\biggr)^{1/2}.
  \]
  For this fixed $s$, put
  \[
    N_s
    =
    \#\set{v\in \ell_1^{-1}\ZZ:\ \exists r\in \mathcal{R}_a(\ell_1,\ell_2)
    \text{ such that \eqref{eq:cq6v987zct} holds with this }s}.
  \]
  Subtracting two instances of \eqref{eq:sampi_parabola_explicit} and applying the
  square-difference count gives
  \begin{multline}\label{eq:fixed_s_v_count_explicit}
    N_s
    \ll 1
    + \frac{\Delta\Omega
    \bigl(\frac{G^4U^{20}}{\Delta}
    +\frac{\Delta^4}{H^2}\frac{G^5U^{45}}{\Omega^{14}}\bigr)}
    {(\ell_1\ell_2(\ell_2-\ell_1)|s|)^{1/2}}\\
    + \frac{|s|^{1/2}}{(\ell_1\ell_2(\ell_2-\ell_1))^{1/2}}
    \biggl(
    \frac{\Omega}{\ell_1}
    +\frac{\ell_2\Delta^2}{HG\Omega^2}
    \biggr)
    + \frac{G\Omega^5\ell_2}{(\ell_1\ell_2(\ell_2-\ell_1))^2}|s|
    +\frac{\ell_1\ell_2(\ell_2-\ell_1)\Delta^3}{HG^2\Omega^6}.
  \end{multline}

  Now, recalling \eqref{eq:qoppa_eval} and \eqref{align:cq6yhxqmud}, consider
  \begin{equation}
    \varphi_{v}(r)
    :=
    6\ell_1 \frac{Xav}{\tilde d_a(r)^4}
    - 12\ell_1\ell_2(\ell_2 - \ell_1)\frac{Xa\tilde b_a(r)^2}{\tilde d_a(r)^5}
    =
    \Qoppa + O \biggl( \frac{1}{\Delta} \frac{G^4U^{15}}{\Omega^5} \biggr).
  \end{equation}
  By \eqref{eq:cq6waeioej}, for $C$ sufficiently large,
  \[
    |\varphi_v'(\rho)|\asymp \frac{T}{R},
    \qquad
    T=\ell_1\frac{XAV_s}{D^4},
  \]
  and hence
  \[
    \frac{T}{R}\asymp
    \frac{1}{\Delta\Omega}
    \biggl(\frac{|s|}{\ell_1\ell_2(\ell_2-\ell_1)}\biggr)^{1/2}.
  \]
  Write
  \[
    \delta = \frac{1}{\Delta}\frac{G^4U^{15}}{\Omega^5}.
  \]
  Lemma~\ref{lem:crude_frac_bound} gives for each fixed $v$ occurring with this $s$ that
  \begin{equation}\label{eq:fixed_v_r_count_large_defect}
    \#\set{r:\norm{\varphi_v(r)}\ll \delta}
    \ll
    R\delta+\frac{\delta}{T/R}
    \asymp
    \frac{H}{\Delta^2}\frac{G^5U^{15}}{\Omega^2}
    +
    \frac{G^4U^{15}}{\Omega^4}
    \biggl(\frac{\ell_1\ell_2(\ell_2-\ell_1)}{|s|}\biggr)^{1/2}.
  \end{equation}
  Therefore
  \begin{multline}
    \#\set{r\in \mathcal{R}_a(\ell_1,\ell_2): |v(r)|\ge
      C \biggl( \frac{\Delta^3}{H}\frac{G^{\frac{5}{2}}U^{\frac{45}{2}}}{\Omega^6}
      + \Delta^{\frac{1}{2}}G^2U^{10}\Omega \biggr)}\\
    \ll
    \sum_{1\le |s|\ll
    \ell_1^3\ell_2(\ell_2-\ell_1)U^{10}/\Omega^8}
    N_s
    \cdot
    \biggl(
    \frac{H}{\Delta^2}\frac{G^5U^{15}}{\Omega^2}
    +
    \frac{G^4U^{15}}{\Omega^4}
    \biggl(\frac{\ell_1\ell_2(\ell_2-\ell_1)}{|s|}\biggr)^{1/2}
    \biggr).
  \end{multline}
  Substituting \eqref{eq:fixed_s_v_count_explicit}, using
  \[
    \sum_{1\le |s|\ll
    \ell_1^3\ell_2(\ell_2-\ell_1)U^{10}/\Omega^8}1
    \ll
    \ell_1^3\ell_2(\ell_2-\ell_1)\frac{U^{10}}{\Omega^8},
  \]
  \[
    \sum_{1\le |s|\ll
    \ell_1^3\ell_2(\ell_2-\ell_1)U^{10}/\Omega^8}|s|^{-1/2}
    \ll
    \bigl(\ell_1^3\ell_2(\ell_2-\ell_1)\bigr)^{1/2}
    \frac{U^5}{\Omega^4},
  \]
  \[
    \sum_{1\le |s|\ll
    \ell_1^3\ell_2(\ell_2-\ell_1)U^{10}/\Omega^8}|s|^{1/2}
    \ll
    \bigl(\ell_1^3\ell_2(\ell_2-\ell_1)\bigr)^{3/2}
    \frac{U^{15}}{\Omega^{12}},
  \]
  \[
    \sum_{1\le |s|\ll
    \ell_1^3\ell_2(\ell_2-\ell_1)U^{10}/\Omega^8}|s|
    \ll
    \bigl(\ell_1^3\ell_2(\ell_2-\ell_1)\bigr)^2
    \frac{U^{20}}{\Omega^{16}},
  \]
  and the trivial bound for the $|s|^{-1}$-sum by the first displayed sum, and then applying
  $\ell_1,\ell_2\ll GU^5$, $\Omega\ll U$, $1\ll GU^3\Omega^4$,
  $G^2U^5\ll\Delta$, and $GU^{10}\ll H/\Delta^2$, the preceding display is
  \[
    \ll
    \frac{H}{\Delta}
    \biggl(
    \frac{1}{\Delta}\frac{G^{14}U^{75}}{\Omega^{13}}
    +
    \frac{\Delta^2}{H}\frac{G^{14}U^{90}}{\Omega^{27}}
    \biggr),
  \]
  as desired.
\end{proof}
\subsection{Putting together the ranges}

In this section, we apply Propositions \ref{prop:smallv} and \ref{proposition:cq6v970jxq} in their various ranges
to obtain Proposition \ref{prop:Ra_bound_lower}. We begin with an upper bound for $\mathcal{R}_a(\ell_1, \ell_2)$ for any $\ell_1 < \ell_2 \ll W$.
Applying \eqref{eq:cq6uvm6vi6} with $V_+ $ a sufficiently large multiple of $\frac{\Delta^3}{H}\frac{U^{10}}{\Omega^6}$, \eqref{eq:cq6uvm8sc0} with $V_+ $ a sufficiently
large multiple of $ \min(\frac{\Delta^3}{H}\frac{G^{\frac{5}{2}}U^{\frac{45}{2}}}{\Omega^6} + \Delta^{\frac{1}{2}}G^2U^{10}\Omega, \Delta U^5/\Omega^3)$, and
Proposition \ref{proposition:cq6v970jxq}, we obtain that $ \#\mathcal{R}_a(\ell_1, \ell_2)$ is $\ll$
\begin{multline}
  \frac{H}{\Delta} \biggl( \frac{1}{\Delta} \frac{G^4U^{10}}{\Omega^2}
  + \frac{\Delta^2}{H} \frac{G^4U^{45}}{\Omega^{14}}
  + \frac{1}{\Delta^{\frac{1}{2}}}\frac{G^{\frac{9}{2}}U^{35}}{\Omega^{8}}
  + \frac{\Delta^2}{H} \frac{G^{\frac{15}{2}}U^{\frac{85}{2}}}{\Omega^8}
  + \frac{1}{\Delta^{\frac{1}{2}}}\frac{G^7U^{30}}{\Omega}
  + \frac{1}{\Delta}\frac{G^{14}U^{75}}{\Omega^{13}} + \frac{\Delta^2}{H} \frac{G^{14}U^{90}}{\Omega^{27}}\biggr)\\
  \ll \frac{H}{\Delta} \biggl( \frac{1}{\Delta^{\frac{1}{2}}} \frac{G^7U^{41}}{\Omega}
  + \frac{\Delta^2}{H} \frac{G^{14}U^{90}}{\Omega^{27}} + \frac{1}{\Delta}\frac{G^{14}U^{75}}{\Omega^{13}}\biggr).
\end{multline}
Recalling that $\# \mathcal{R}_a \ll W^2 \mathcal{R}_a(\ell_1, \ell_2) $ and $W^2\asymp G^2U^{10} $, we obtain
\begin{equation}
  \#\mathcal{R}_a\ll \frac{H}{\Delta} \biggl( \frac{1}{\Delta^{\frac{1}{2}}} \frac{G^9U^{51}}{\Omega}
  + \frac{\Delta^2}{H} \frac{G^{16}U^{100}}{\Omega^{27}} + \frac{1}{\Delta} \frac{G^{16}U^{85}}{\Omega^{13}}\biggr),
\end{equation}
as desired.

\section{Proof of estimates for $\Delta$ around and above $H^{\frac 12}$} \label{sec:upper}

The main bounds of this section are:

\begin{proposition}\label{prop:upper_regime_est_easy}
  For $\Delta G^{-\frac{1}{4}} U^{-\frac{3}{4}}\ll A\ll \Delta U $, we have that
  \[
    \sum_{a\sim A}\#\mc R_a\ll H\biggl(\frac{H}{\Delta^2} G\Omega^2
    + \frac{H^{\frac 12}}{\Delta} G^{\frac{3}{2}}\Omega^{\frac{9}{2}} + \frac{1}{\sqrt{H}}
    G^{\frac{1}{2}}\Omega^{\frac{5}{2}}\biggr),
  \]
  where we write $\Omega = A/\Delta$.
\end{proposition}
\begin{proposition}\label{prop:upper_regime_est_nontrivial}
  There exists $\sigma > 0$ such that if $G, U\ll X^{\frac{1}{10^5}}$ and  
  \[
    X^{-\frac{1}{10^5}} \ll \frac{H}{\Delta^2}\ll X^{\frac{1}{10^5}},
  \]
  then for $a $ satisfying $\Delta G^{-\frac{1}{4}} U^{-\frac{3}{4}}\ll  a \ll \Delta U $, we have that 
  \[
    \#\mc R_a\ll \frac{H}{\Delta} \biggl(1 + \frac{H}{\Delta^2}\biggr)X^{-\sigma} (GU)^{O(1)}.
  \]
\end{proposition}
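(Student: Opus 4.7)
The approach follows the sketch in Section~2.6. Fix $a \sim A$ and assume the reduction $A \asymp \Delta(GU)^{O(1)}$ from Section~\ref{sec:final_reductions}. For each $r \in \mc R_a$ write $d = d_a^*(r) \in \mc D_a$, so $d, d+a \in \mc D$ and hence
\[
  F_a(d) \in \ZZ + O\bigl((H\Delta^2)^{-1}\bigr),
\]
with $F_a(d) \asymp Xa/(H\Delta)^2$ by (\ref{eq:std_exp}). The first step is to replace $d$ by $\tilde d_a(r)$: using Proposition~\ref{prop:approx_param} together with $|F_a'(d)| \asymp Xa/d^3$ and the critical-range hypotheses $X^{-1/10^5} \ll H/\Delta^2 \ll X^{1/10^5}$ and $G, U \ll X^{1/10^5}$, one verifies that
\[
  F_a(d) = F_a(\tilde d_a(r)) + O(X^{-\sigma_1})
\]
for some $\sigma_1 > 0$. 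Thus the integer $m(r) = \lfloor F_a(d) \rfloor$ agrees with $\lfloor F_a(\tilde d_a(r)) \rfloor$ up to $O(1)$ ambiguities; fix one admissible value $m$ from now on.

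Next, I translate the integrality condition into a bracket polynomial condition. Inverting $F_a$ via the Taylor series (\ref{eq:std_exp}) in the equation $F_a(d) = m + O((H\Delta^2)^{-1})$ and comparing with $d = \tilde d_a(r) + O(\Delta G \cdot \Delta^3/A^3)$, one obtains an explicit smooth function $\Psi_m(r)$ with $\norm{\Psi_m(r)} \le X^{-\sigma_2}$ for some $\sigma_2 > 0$. Since $m = m(r)$ is itself a floor of the smooth function $F_a \circ \tilde d_a$ evaluated at $r$, substitution gives a bracket-polynomial condition. To pass to genuine polynomial sequences, I partition the range $r \asymp R$ into subintervals of length $R_0 \asymp R \cdot X^{-\sigma_3}$ with $\sigma_3 \ll \sigma_2$; on each subinterval $\tilde d_a$, $F_a \circ \tilde d_a$, and $\Psi_m$ are well-approximated by their Taylor polynomials of bounded degree, so the condition takes the shape $\norm{P(n)} \ll X^{-\sigma_2}$, where $P$ is a bracket polynomial of bounded degree whose coefficients are explicit rational functions of $X, a, m, \ell_1, \ell_2$.

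Finally, I invoke the bracket polynomial equidistribution results of Section~\ref{sec:bracket_ests} (based on Green--Tao's quantitative Leibman theorem~\cite{GT}). These ensure that on each subinterval $\norm{P(n)} \le X^{-\sigma_2}$ holds for at most a fraction $X^{-\sigma}$ of $n$'s, \emph{provided} the corresponding polynomial orbit on the relevant nilmanifold is not smoothly trapped in a proper subnilmanifold --- which would force a low-height rational relation among the coefficients of $P$. Summing over the $\asymp X^{\sigma_3}$ subintervals and over the $O(1)$ admissible values of $m$ yields the claimed bound. \textbf{The main obstacle} is verifying this non-degeneracy condition: the coefficients of $P$ depend on $X, H, a, m$ through explicit combinations of the Taylor coefficients of $F_a$ and $\tilde d_a$, and one must certify uniformly in the critical range that no low-height $\ZZ$-linear combination of them is abnormally close to $0$. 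A secondary issue is tracking the degree/complexity of $P$ carefully, since these control the exponent $\sigma$ delivered by Green--Tao, and showing that the rare degenerate $(a,m)$ pairs yield an algebraic constraint that can be ruled out directly or absorbed into the $(GU)^{O(1)}$ factor.
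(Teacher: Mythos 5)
Your skeleton is the paper's: use the integrality of $F_a(d_a^*(r))$ to pin it down in terms of $\lfloor \tilde f_a(r)\rfloor$, invert $F_a$ to get a refined approximation of $d_a^*(r)$ whose error is $\lll 1$, deduce a small-fractional-part condition of bracket type, and feed it to the Green--Tao-based equidistribution results of \S\ref{sec:bracket_ests}. But two points need correction. A quantitative one first: the claim $F_a(d_a^*(r)) = F_a(\tilde d_a(r)) + O(X^{-\sigma_1})$ is false in the stated range. By (\ref{eq:f_approx}) the error is $O(H/A^2) \asymp (H/\Delta^2)(GU)^{O(1)}$, which under the hypotheses can be a small \emph{positive} power of $X$; hence $\lfloor F_a(d_a^*(r))\rfloor$ is determined by $\lfloor\tilde f_a(r)\rfloor$ only up to $O(1 + H/A^2)$ shifts $j$, not $O(1)$. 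This is harmless (it is exactly the origin of the factor $1 + H/\Delta^2$ in the statement), but your final count must sum over these $\asymp 1 + H/\Delta^2$ values of $m$, not over $O(1)$ of them.

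The more serious issue is that the step you yourself flag as ``the main obstacle'' is both unresolved and misframed. You propose to certify, uniformly in the critical range and in $(a,m)$, that no low-height $\mathbb{Z}$-linear combination of the relevant coefficients is abnormally close to $0$, and to dispose of ``degenerate $(a,m)$ pairs'' algebraically. No such uniform non-degeneracy holds: on individual short intervals the Leibman obstruction (a relation $\|q_1\alpha_1+q_2\alpha_2+q_3\alpha_3\|$ small with $q_i \ll \delta^{-O(1)}$) can certainly occur. The paper's resolution is Proposition \ref{prop:floor_fn_est}, applied directly to $g_j(r) = \breve d_a(\tilde f_a(r)+j) - \breve d_a'(\tilde f_a(r)+j)\{\tilde f_a(r)\}$ with derivative scales $T_1 = H\Delta/F$, $T_2 = F$, $T_3 = H\Delta$; the hypotheses $G,U \ll X^{1/10^5}$ and $X^{-1/10^5} \ll H/\Delta^2 \ll X^{1/10^5}$ exist precisely to guarantee $R^{i-1/100} \ll T_i \ll R^{i+1/100}$. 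Inside the proof of that proposition the degenerate base points $n_0$ of the short intervals are not excluded pointwise: because of the separation of scales $T_i \approx R^i$, any nontrivial combination $q_1 g_1'(n_0)+q_2 g_2'(n_0)+q_3 g_3'(n_0)$ with bounded-height $q_i$ is a smooth function of controlled size, and exponent-pair (van der Corput) estimates show it lies near an integer for only a $\delta^{O(1)}$-proportion of $n_0$. Without this averaged treatment of the obstruction your argument does not close; with it, it is exactly the paper's proof.
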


That Propositions \ref{prop:upper_regime_est_easy}, \ref{prop:upper_regime_est_nontrivial} imply \eqref{eq:upper_regime_easy}, \eqref{eq:upper_regime_est} respectively follows
from the fact that by Proposition \ref{prop:approx_param} and \eqref{eq:cq6ufgf398},
\begin{equation}
  \#\mathcal{D} \ll \frac{H}{U} +  \sum_{\Delta G^{-\frac{1}{4}}U^{-\frac{3}{4}} \ll a\ll \Delta U} \#\mathcal{D}_a
  \ll \frac{H}{U} + \sum_{\Delta G^{-\frac{1}{4}}U^{-\frac{3}{4}}\ll A\ll \Delta U \\ A = 2^k }
  \biggl(1 + G^{-\frac{2}{3}} \biggl(\frac{A}{\Delta}\biggr)^{-\frac{8}{3}}\biggr)
   \sum_{a\sim A}\#\mathcal{R}_a.
\end{equation}

The main input required to show these bounds, apart from manipulations involving
the implications of our ``approximate parametrization'' Proposition~\ref{prop:approx_param},
will be the results of \S\ref{sec:bracket_ests} which ultimately come from Green and Tao's
\cite{GT} quantitative version of Leibman's theorem on the equidistribution of
polynomial sequences on nilmanifolds.

\subsection{Further implications of the approximate parametrization and a
  proof of Proposition \ref{prop:upper_regime_est_easy}}

Let $f_a^*(r)$ be the integer nearest $F_a(d_a^*(r))$, and let
$\tilde f_a(r) = F_a(\tilde d_a(r))$. By \eqref{eq:std_exp},
\[
  F_a(d) \asymp \frac{Xa}{d^3} \asymp \frac{H^2}{\Delta^2} G\Omega =: F 
\]

Also, by \eqref{eq:d_approx}, we have
\begin{equation}\label{eq:f_approx} 
  \tilde f_a(r) = f_a^*(r) + O\biggl(F\frac{\Delta^3H^4}{XA^3}\biggr)
  = f_a^*(r) + O\biggl(\frac{H}{\Delta^2}\frac{1}{\Omega^2}\biggr).
\end{equation}

We therefore have that
\[
  \#{\cal R}_a \ll \sum_{r\asymp R} \charf{\norm*{\tilde f_a(r)}\ll \frac{H}{\Delta^2}\frac{1}{\Omega^2}}.
\]
It follows that
\[
  \sum_{a \sim A}\#{\cal R}_a \ll \sum_{r\asymp R}\sum_{a\sim A}
  \charf{\norm*{\tilde f_a(r)}\ll \frac{H}{\Delta^2}\frac{1}{\Omega^2}}.
\]

To bound the inner sum, note that for all $a\sim A$, we have by \eqref{eq:std_exp}
and \eqref{eq:tilded_ests} that 
\[
  |\tilde f_a(r)|, A \bigg|\frac{\partial}{\partial a} \tilde f_a(r)\bigg|,
  A^2 \bigg|\frac{\partial^2}{{\partial a}^2} \tilde f_a(r)\bigg| \asymp F.
\]
Therefore, by Proposition \ref{prop:poisson_trivial} (the trivial bound applied following
Poisson summation after Fourier expanding), we have that
\[
  \sum_{a\sim A}\charf{\norm*{\tilde f_a(r)}\ll \frac{H}{A^2}}\ll \frac{H}{A}
  + \sqrt{F \frac{A^2}{H}} + \frac{A}{\sqrt{F A^2/H}}\ll \Delta
  \biggl( \frac{H}{\Delta^2}\Omega^{-1} + \sqrt{\frac{H}{\Delta^2}} G^{\frac{1}{2}}\Omega^{\frac{3}{2}}
  + \frac{1}{\sqrt{H}} \frac{1}{G^{\frac{1}{2}}\Omega^{\frac{1}{2}}}\biggr),
\]
and the desired result follows upon summing over $r\asymp R\asymp HG\Omega^3/\Delta$.

\subsection{Boosting the approximation of $d_a^*(r)$ and the proof of Proposition \ref{prop:upper_regime_est_nontrivial}}
At this point, in the critical case, we have essentially dealt with $\Delta \ggg H^{\frac 12}$.
Along with the results of \S\ref{sec:lower}, we are essentially left with the case $\Delta\asymp H^{\frac 12}$.
In this range, the error term in \eqref{eq:f_approx} in the critical case is essentially
$O(1)$, so $f_a^*(r)$ is determined up to $O(1)$ possibilities as it is an integer. These
possibilities may be expressed easily in terms of $\floor*{\tilde f_a}$.

The key idea is that once $f_a^*$ is determined, we may use the fact that
\[
  F_a(d_a^*(r)) = f + O\pfrc{1}{H\Delta^2} 
\]
to obtain a superior estimate for $d_a^*(r)$ than is provided by \eqref{eq:d_approx}.
As it turns out, where we are concerned, this estimate has error $\lll 1$, so the fact
that $d_a^*(r)$ is an integer is a nontrivial restriction that is not often satisfied.
We show this is so by noting that the approximation is essentially a bracket
polynomial (after restricting to a short interval), and thus comes from a polynomial
sequence on a nilmanifold whose equidistribution is well-understood in a sufficiently
quantitative form for us due to Green and Tao \cite{GT}.

In the general situation, by \eqref{eq:f_approx}, there exists an integer $|j|\ll 1 + H/a^2$
such that $f_a^*(r) = \floor*{\tilde f_a(r)} + j$. Let $\breve d_a(f) = F_a^{-1}(f)$. For $f\sim F$, we have 
\[
  F\breve d_a'(f), F^2\breve d_a''(f) \asymp H\Delta.
\]
Note that since
\[
  F_a(d_a^*(r)) = f_a^*(r)\biggl(1 + O\biggl(\frac{\Delta}{GH^3a}\biggr)\biggr),
\]
by \eqref{eq:std_exp}, we have 
\begin{equation}
  d_a^*(r) = F_a^{-1}(f_a^*(r))\biggl(1 + O\biggl(\frac{\Delta}{GH^3a}\biggr)\biggr)
  = \breve d_a(\floor*{\tilde f_a(r)} + j) + O\pfrc{\Delta^2}{H^2Ga}.
\end{equation}
Taylor expanding yields
\[
  \breve d_a(\tilde f_a(r) + j - \set*{\tilde f_a(r)})
  = \breve d_a(\tilde f_a(r) + j) - \breve d_a'(\tilde f_a(r) + j)\set*{\tilde f_a(r)}
  + O\biggl(\frac{\Delta^5}{H^3}\pfrc{\Delta}{a}^2\biggr).
\]
In particular, 
\[
  \norm*{\breve d_a(\tilde f_a(r) + j) - \breve d_a'(\tilde f_a(r) + j)\set*{\tilde f_a(r)}}
  \ll \frac{\Delta^5}{H^3}\pfrc{\Delta}{a}^2 + \frac{\Delta^2}{H^2Ga}.
\]
Write
\[
  g_j(r) = \breve d_a(\tilde f_a(r) + j) - \breve d_a'(\tilde f_a(r) + j)\set*{\tilde f_a(r)}, 
\]
By the preceding discussion,
\begin{equation}\label{eq:Ra_brack_bd}
  \#{\cal R}_a \le \sum_{|j|\ll 1 + \frac{H}{a^2}}\sum_{r\asymp R}
  \charf{\norm*{g_j(r)}\ll \frac{\Delta^5}{H^3}\pfrc{\Delta}{a}^2 + \frac{\Delta^2}{H^2Ga}}.
\end{equation}
Let $T_3 = H\Delta, T_2 = F, T_1 = {H\Delta}/F$. 
By the hypotheses of Proposition \ref{prop:upper_regime_est_nontrivial}, for $i\le 3$, we have
\[
  R^{i - \frac{1}{100}} \ll T_i \ll R^{i + \frac{1}{100}}.
\]
Therefore, the hypotheses of Proposition~\ref{prop:floor_fn_est} are satisfied and
there exists $\sigma > 0 $ such that $\set*{g_j(r)}_{r\asymp R}$ is 
$R^{-20\sigma}$-equidistributed mod 1. By \eqref{eq:Ra_brack_bd}, we obtain
\[
  \#{\cal R}_a\ll R\biggl(1 + \frac{H}{a^2}\biggr)\biggl(R^{-20\sigma} + \frac{\Delta^5}{H^3}\pfrc{\Delta}{a}^2 + \frac{\Delta^2}{H^2Ga}\biggr).
\]
Proposition \ref{prop:upper_regime_est_nontrivial} follows upon noting that the bounds we have
assumed in its hypotheses crudely imply that
\[
  R^{-20\sigma}  + \frac{\Delta^5}{H^3}\pfrc{\Delta}{a}^2 + \frac{\Delta^2}{H^2Ga}\ll X^{-\sigma'} (GU)^{O(1)}
\]
for some absolute $\sigma' > 0 $.

\section{Bracket polynomial equidistribution estimates}\label{sec:cq6v6y3nkr}
\label{sec:bracket_ests}

This section's main result is the following general estimate:
\begin{proposition}
  \label{prop:floor_fn_est}
  Take $\tau  = 1/{100}$ and $\sigma > 0$ sufficiently small. 
  Suppose that $T_1, T_2, T_3, N$ are sufficiently large parameters satisfying $N^{i - \tau}\ll T_i\ll N^{i + \tau}$.
  Furthermore, suppose that $g_1, g_2, g_3$ are smooth functions supported on values
  $\asymp N$ satisfying
  \[
    g_i^{(j)}(x)\asymp_j  \frac{T_i}{N^j},
  \]
  for all $x\asymp N$, $ j\ge 0, $ $ i\le 3$.
  Then, 
  \[
    g_3(n) + g_1(n)\set{g_2(n)}
  \]
  is $N^{-\sigma}$-equidistributed modulo $1$ for $n\sim N$.
\end{proposition}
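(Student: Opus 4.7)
The plan is to reduce the required equidistribution to an exponential-sum cancellation statement via the Erd\H{o}s--Tur\'an inequality, pass to short intervals on which each $g_i$ is well-approximated by a polynomial, lift the resulting bracket polynomial to a polynomial orbit on a nilmanifold, and conclude via Green and Tao's quantitative Leibman theorem \cite{GT}. Concretely, for some $\sigma' > 0$ depending on $\sigma$ it suffices to show
\[
  \bigg|\sum_{n\sim N} e\bigl(\xi(g_3(n) + g_1(n)\{g_2(n)\})\bigr)\bigg| \ll N^{1-\sigma'}
\]
for every integer $\xi$ with $1 \le |\xi| \le N^{C\sigma}$.

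I would then partition $[N, 2N]$ into subintervals of length $M = N^{1-\kappa}$ for a parameter $\kappa = \kappa(\sigma)$ small relative to $\tau$. On each interval $[n_0, n_0+M]$ I Taylor-expand $g_i(n_0+m) = P_i(m) + O(T_i(M/N)^{d+1})$ to a large degree $d = d(\sigma)$. The derivative hypothesis $g_i^{(j)} \asymp_j T_i/N^j$ combined with $T_i \ll N^{i+\tau}$ makes each remainder harmless even after multiplication by $\xi$ and by a factor of $g_1(n) \sim N^{1+\tau}$ (in the case of the $g_2$ remainder, which is amplified by the bracket factor); the Taylor coefficients $c_{i,j} := g_i^{(j)}(n_0)/j!$ satisfy $c_{i,j} \asymp N^{i-j\pm\tau}$.

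At this point the short-interval sum is $\sum_m e\bigl(\xi(P_3(m)+P_1(m)\{P_2(m)\})\bigr)$, and I would realize the bracket polynomial $P_3(m) + P_1(m)\{P_2(m)\}$ in the standard Bergelson--Leibman form as a Lipschitz coordinate $F(g(m)\Gamma)$ on a nilmanifold $G/\Gamma$ of bounded step and dimension depending only on $d$, with $g \colon \ZZ \to G$ a polynomial sequence. The bracket product is encoded through a Heisenberg-type commutator factor in $G$, while $P_3(m)$ is carried by an abelian factor. The Green--Tao quantitative Leibman theorem then gives a dichotomy: either $g(m)\Gamma$ is $M^{-\sigma''}$-equidistributed on $G/\Gamma$, in which case the desired cancellation follows from $F$ being Lipschitz of bounded norm, or there is a nontrivial horizontal character $\eta \colon G \to \RR/\ZZ$ of Mal'cev norm at most $M^{C\sigma''}$ with $\|\eta\circ g\|_{C^\infty[M]} \le M^{C\sigma''}$, meaning that every degree-$j$ coefficient of the honest polynomial $\eta\circ g(m)$ lies within $M^{C\sigma''} M^{-j}$ of $\ZZ$.

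The main obstacle is ruling out this second alternative for all but a negligible proportion of short intervals. Here $\eta\circ g$ is a polynomial in $m$ whose coefficients are integer linear combinations, with integer weights of size at most $M^{C\sigma''}$, of the Taylor coefficients $c_{i,j}$ and of products $c_{1,j_1}c_{2,j_2}$ produced by the nilpotent commutator. As $n_0$ varies across $[N, 2N]$, the leading coefficients $c_{i,i}(n_0) \asymp N^{\pm\tau}$ move smoothly with derivative of order $\asymp N^{-1\pm\tau}$, so a standard sieving argument bounds the measure of $n_0$ for which \emph{any} nontrivial $\ZZ$-combination of these coefficients (and their pairwise products) simultaneously lies $M^{-j+C\sigma''}$-close to $\ZZ$ at every relevant degree $j$ by $\ll N^{1-\sigma'''}$, provided $\kappa$, $\sigma$, $\sigma'$, $\sigma''$, $\sigma'''$ are all chosen small enough against $\tau = 1/100$. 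The short intervals corresponding to this bad set of $n_0$ contribute at most trivially to the total, while the remaining intervals are handled by the equidistribution case above. The genuinely delicate step is verifying, for every nontrivial $\eta$, that some explicit coefficient of $\eta\circ g$ is forced to vary nontrivially with $n_0$; this is where the full hypothesis $T_i \asymp N^{i\pm\tau}$ for each of $i=1,2,3$ enters essentially, and is the only step that is not routine.
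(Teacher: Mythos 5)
Your outline follows the same broad strategy as the paper (restrict to short intervals, Taylor-expand, realize the bracket polynomial as a Lipschitz coordinate on a Heisenberg-type nilmanifold, apply Green--Tao's quantitative Leibman theorem, and rule out the non-equidistributed alternative for most base points), but the decisive step is exactly the one you defer as ``the only step that is not routine,'' and deferring it leaves a genuine gap: you never show that the horizontal-character obstruction can only hold for a negligible set of $n_0$. The paper makes this step tractable by engineering the reduction so that the obstruction involves \emph{only the linear coefficients}: it uses intervals of length $N^{\kappa}$ with $\kappa$ small (not $N^{1-\kappa}$) and truncates $g_3,g_2,g_1$ to degrees $3,2,1$, so that Proposition~\ref{prop:bracket_poly} (proved by computing the abelianization of $\RR\times H$, through which every horizontal character factors) yields the concrete condition $\norm{q_1 g_1'(n_0)+q_2 g_2'(n_0)+q_3 g_3'(n_0)}\ll\delta^{-O(1)}N^{-\kappa}$ with $q_i\ll\delta^{-O(1)}$. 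One then pigeonholes in $(q_1,q_2,q_3)$, disposes of $q_2=q_3=0$ by Lemma~\ref{lem:crude_frac_bd}, and otherwise uses the scale separation $T_i/N\asymp N^{i-1\pm\tau}$ (so the top nonzero index dominates the derivatives) together with an exponent-pair estimate (e.g.\ $(1/6,1/6)$) to bound the number of admissible $n_0$ by $O((\delta^{-O(1)}N^{-\kappa}+N^{-1/12})N)$. Your ``standard sieving argument'' is a placeholder for precisely this analysis; note also that since horizontal characters kill the commutator direction, the products $c_{1,j_1}c_{2,j_2}$ you anticipate do not actually appear in $\eta\circ g$, so the delicate point is not which coefficients occur but how to count base points at which a fixed small integer combination of the $g_i'$ (or higher Taylor coefficients, in your high-degree setup) is abnormally close to $\ZZ$ --- and that count is an exponential-sum problem you have not addressed.

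Two further points would need repair even granting the main step. First, replacing $g_2(n_0+m)$ by its Taylor polynomial \emph{inside} the fractional part is not controlled by the size of the Taylor remainder alone, because $\set{\cdot}$ is discontinuous at integers; the paper explicitly discards, via exponential sum estimates, those $n$ for which $\set{g_2(n_0+h)}-\set{\tilde g_2(h;n_0)}$ is large, and you would need the analogous excision. Second, your bookkeeping with blocks of length $M=N^{1-\kappa}$ is internally inconsistent: there are only $\asymp N^{\kappa}$ block base points $n_0$, so a bad-set bound of the form $\ll N^{1-\sigma'''}$ for $n_0\in[N,2N]$ does not translate into a bound on the number of bad blocks, and with base points spaced $N^{1-\kappa}$ apart the required counting is over a sparse set, which is harder, not easier, than the paper's dense family of short-interval base points. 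Flipping to intervals of length $N^{\kappa}$, as in the paper, resolves both the bookkeeping and lets the low-degree truncation (and hence the linear-coefficient-only obstruction) go through.
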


After Taylor expanding, we shall show that Proposition \ref{prop:floor_fn_est} follows from the following:
\begin{proposition}\label{prop:bracket_poly}

  Take $\delta > 0$, $N\ge 1 $. There exist absolute constants $C_1, C_2 > 0$ such that the
  following holds.
  
  Let $f_1, f_2, f_3$ be polynomials of degrees $\le 3$. Suppose
  furthermore that the linear coefficient of $f_i(x)$ is $\alpha_i$.

  Then, if $f_3(n) + f_1(n)\set{f_2(n)}$ is not $\delta$-equidistributed
  modulo $1$ for $n\le N$, there exist $q_1, q_2, q_3\ll \delta^{-C_1}$ not all $0$ such that
  \[
    \norm{q_1\alpha_1 + q_2\alpha_2 + q_3\alpha_3}\ll \frac{\delta^{-C_2}}{N}.
  \]
\end{proposition}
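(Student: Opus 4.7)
The plan is to realize $f_3(n) + f_1(n)\set{f_2(n)}\bmod 1$ as (a smoothing of) the value of a bounded, piecewise-Lipschitz function $F$ along a polynomial orbit on the step-$2$ nilmanifold $G/\Gamma$, where $G = H\times\RR$ with $H$ the three-dimensional Heisenberg group and $\Gamma$ the standard cocompact lattice; failure of $\delta$-equidistribution of this value can then be converted into a Diophantine relation on $\alpha_1,\alpha_2,\alpha_3$ via Green and Tao's quantitative Leibman theorem \cite{GT}.

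Concretely, define the polynomial sequence $g:\ZZ\to G$ by $g(n) = ((f_1(n), f_2(n), 0), f_3(n))$, and let $F(x,y,z,w) = xy - z + w$ on the Mal'cev fundamental cube $[0,1)^3\times[0,1)$. A direct reduction in Mal'cev coordinates shows that $F(g(n)\Gamma) \equiv f_3(n) + f_1(n)\set{f_2(n)}\bmod 1$: the Heisenberg piece of $g(n)$ reduces to $(\set{f_1(n)}, \set{f_2(n)}, \set{-\floor{f_1(n)}\set{f_2(n)}})$, and substituting gives the identity modulo $1$. The function $F$ fails to be Lipschitz on $G/\Gamma$ only because of jumps across the boundary of the fundamental domain; a Fourier expansion of the coordinate indicators together with a smooth partition of unity (as in \cite[\S 2]{GT}) decomposes $F$ into Lipschitz test functions of norm $\ll \delta^{-O(1)}$ with a boundary contribution of measure $\ll \delta^{O(1)}$, which is absorbed into the equidistribution loss.

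The hypothesis that $f_3(n) + f_1(n)\set{f_2(n)}$ is not $\delta$-equidistributed modulo $1$ thus implies, with polynomial loss, that the polynomial orbit $(g(n)\Gamma)_{n\le N}$ is not $\delta^{O(1)}$-equidistributed on $G/\Gamma$ against some Lipschitz test function of bounded norm. The quantitative Leibman theorem then produces a nontrivial horizontal character $\eta:G\to\RR/\ZZ$ of complexity $\ll\delta^{-C_1}$ such that the smoothness norm $\|\eta\circ g\|_{C^\infty[N]}$ is $\ll\delta^{-C_2}$. Since $\eta$ factors through the abelianization of $G$, which is isomorphic to $\RR^3$ (the two horizontal Heisenberg coordinates together with the $\RR$-factor), it takes the form $\eta(g(n)) = q_1 f_1(n) + q_2 f_2(n) + q_3 f_3(n) \bmod 1$ for integers $|q_i|\ll\delta^{-C_1}$, not all zero. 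The smoothness-norm bound on the linear Taylor coefficient is precisely $\norm{q_1\alpha_1 + q_2\alpha_2 + q_3\alpha_3}\ll\delta^{-C_2}/N$, which is the claim.

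The main technical obstacle will be the first step: verifying the Mal'cev-coordinate identity and, more substantially, smoothing the boundary discontinuities of $F$ without losing more than a polynomial factor in $\delta$. This is by now a standard manipulation in the bracket polynomial / nilsequence theory, though some care is required because $f_1, f_2, f_3$ are polynomials of degree up to $3$ rather than linear, so the orbit $g(n)\Gamma$ does not visit the boundary of the fundamental cube uniformly and one needs the equidistribution statement in a slightly strengthened form. Once the Lipschitz reduction is in place, the remainder is a routine translation of the horizontal character output of \cite{GT} into Diophantine data about $\alpha_1,\alpha_2,\alpha_3$.
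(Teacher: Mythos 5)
Your construction is essentially the paper's: the same nilmanifold $G/\Gamma$ with $G=\RR\times H$ ($H$ the Heisenberg group, $\Gamma=\ZZ\times\Gamma_H$), the same polynomial sequence built from $(f_1,f_2,f_3)$ (up to sign and coordinate conventions), an application of the quantitative Leibman theorem of Green--Tao (Theorem~\ref{thm:gt_main}), and the same endgame: the horizontal character factors through the abelianization $(\RR/\ZZ)^3$, so $\eta\circ g(n)=q_1f_1(n)+q_2f_2(n)+q_3f_3(n)$ with $|q_i|\ll\delta^{-O(1)}$, and the $C^\infty[N]$ bound on the degree-one coefficient gives $\|q_1\alpha_1+q_2\alpha_2+q_3\alpha_3\|\ll\delta^{-O(1)}/N$. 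That part of your argument matches the paper step for step.

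The genuine gap is exactly the step you flag as the ``main technical obstacle,'' and it is the one place where the paper's proof differs from yours. The paper does not use the raw formula $xy-z+w$ on a fundamental cube; it defines $F(\phi(t_1,t_2,t_3,t_4)\Gamma)=t_1-(t_4-t_2t_3)+t_2\lfloor t_3\rfloor$ with values in $\RR/\ZZ$, i.e.\ it builds the bracket correction into the global coordinate expression so that $F$ is (as checked via the standard fundamental domain of the Heisenberg group) well defined and Lipschitz on all of $G/\Gamma$, and moreover pushes Haar measure to Lebesgue measure on $\RR/\ZZ$ because it is $t_1$ plus a function of the remaining coordinates; Theorem~\ref{thm:gt_main} then applies with no smoothing step whatsoever. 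Your $F$, by contrast, is genuinely discontinuous across the boundary of the fundamental domain, and your proposed repair is incomplete as stated: one cannot simply ``absorb'' the boundary contribution, since a priori the orbit $(g(n)\Gamma)_{n\le N}$ could spend a large fraction of its time within $\epsilon$ of the discontinuity locus. You sense this, but the remedy is not a ``strengthened form'' of the equidistribution statement; the standard dichotomy suffices. Either at least $C\epsilon^{1/2}N$ of the points $g(n)\Gamma$ lie within $\epsilon$ of the boundary, in which case testing against a Lipschitz function of norm $O(\epsilon^{-1})$ that majorizes the indicator of this $O(\epsilon)$-measure neighborhood already witnesses failure of $\epsilon^{O(1)}$-equidistribution of the nil-orbit and Theorem~\ref{thm:gt_main} can be applied to that test function instead; or else fewer than $C\epsilon^{1/2}N$ points do, and replacing $F$ by a smoothed $F_\epsilon$ (Lipschitz norm $\epsilon^{-O(1)}$, agreeing with $F$ off the $\epsilon$-neighborhood) retains the correlation up to an $O(\epsilon^{1/2})$ loss; taking $\epsilon=\delta^{O(1)}$ gives the conclusion with only polynomial losses. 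So your route can be closed, but you must either write out this dichotomy or, more simply, adopt the paper's device of a globally Lipschitz $F$ containing the $t_2\lfloor t_3\rfloor$ term, which makes the issue disappear; the remaining discrepancies in your Mal'cev reduction (e.g.\ $\lfloor f_1\rfloor\{f_2\}$ versus $\{f_1\}\lfloor f_2\rfloor$) are convention bookkeeping that you should pin down once the coordinates are fixed.
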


Before describing the proof of Proposition \ref{prop:bracket_poly}, we will
show Proposition \ref{prop:floor_fn_est} assuming Proposition \ref{prop:bracket_poly}. 
\begin{proof}[Proof of Proposition \ref{prop:floor_fn_est} assuming Proposition \ref{prop:bracket_poly}]

  Write $g(n) = g_3(n) + g_1(n)\set{g_2(n)}$. Take $\delta = N^{-\sigma} $ and fix $\kappa = 1/10 $. 
  It is sufficient to show that there are at most $O(\delta N)$-many $n_0\sim N$ for which
  $g(n_0 + h)$ is not $\delta$-equidistributed for $h\le N^{\kappa}$.

  Consider some choice of $n_0\sim N, h\le N^{\kappa}$. Taylor expanding, we have
  \begin{align*}
    g_3(n_0 + h) &=  g_3(n_0) + g_3'(n_0)h + \frac{1}{2}g_3''(n_0)h^2 + \frac{1}{6} g_3'''(n_0) h^3
                   + O(N^{4\kappa + \tau - 1})\\
                 &:=  \tilde g_3(h; n_0) + O(N^{4\kappa + \tau - 1}).
  \end{align*}
  Similarly, we have that $g_i(n_0 + h) = \tilde g_i(h;n_0) + O(N^{4\kappa + \tau - 2})$ for $i\le 2$ with
  \[
    \tilde g_1(h; n_0) = g_1(n_0) + g_1'(n_0) h + \frac{1}{2} g_1''(n_0) h^2,\qquad \tilde g_2(h;n_0) = g_2(n_0) + g_2'(n_0)h
    + \frac{1}{2}g_2''(n_0)h^2 + \frac{1}{6}g_2'''(n_0)h^3.
  \]
  Let $\tilde g(h; n_0) = \tilde g_3(h; n_0) + \tilde g_1( h; n_0)\set{\tilde g_2(h; n_0)}$. Put $\varepsilon=N^{4\kappa+\tau-2}$.
  Outside $\|g_2(n_0+h)\|\ll N^{-\kappa}$, the fractional parts of $\tilde g_2(h; n_0)$ and $g_2(n_0 + h) $ differ by
  $O(\varepsilon)$. With the standard exponent pair $(1/6,1/6)$ (see \cite[\S8.4]{IK} for our exponent pair
  convention), $\#\{n\asymp N:\|g_2(n)\|\ll N^{-\kappa}\}\ll N^{11/12}$.
  In particular, when $\sigma<1/24$, for all but $O(\delta N)$-many $n_0\sim N $, 
  \begin{equation*}
    g(n_0+h)=\widetilde g(h;n_0)+O(N^{4\kappa+2\tau-1})
    =\widetilde g(h;n_0)+O(\delta)
  \end{equation*}
  for all but $O(\delta N^\kappa) $-many $h\le N^\kappa $.

  It remains to show that there are at most $O(\delta N)$-many $n_0\sim N$ for which
  $\tilde g(h;n_0) $ is not $O(\delta)$-equidistributed for $h\le N^{\kappa}$. Let $\mc N^\#$ be the set of such $n_0$.
  Suppose for the sake of contradiction that $\#\mathcal{N}^\# \ge \delta N$.
  By Proposition \ref{prop:bracket_poly}, if $n_0\in\mc N^\#$, there exist integers $q_1, q_2, q_3$ with
  $|q_1|, |q_2|, |q_3|\ll \delta^{-O(1)} $ and
  \begin{equation}\label{eq:rat_approx}
    \norm{g_1'(n_0) q_1 + g_2'(n_0) q_2 + g_3'(n_0) q_3}\ll \delta^{-O(1)}N^{-\kappa}.
  \end{equation}
  Let $\mc N(q_1, q_2, q_3)$ be the set of $n_0\in\mc N^\#$ satisfying \eqref{eq:rat_approx}.
  By the pigeonhole principle, there exists a choice of $|q_1|, |q_2|, |q_3|\ll\delta^{-O(1)}$ such that
  $\mc N(q_1, q_2, q_3)\gg \delta^{O(1)}N$. If $q_2 = q_3 = 0$, this is ruled out by Lemma \ref{lem:crude_frac_bound}.

  Otherwise, note that with $i\in\set{2, 3}$ maximal so that $q_i\ne 0$, we have
  $g(y) = g_1'(y) q_1 + g_2'(y) q_2 + g_3'(y) q_3$ satisfies $g^{(j)}(y)\asymp_j T/N^j $
  for all $j\ge 0$ for some $N^{\frac 12}\ll T\ll N^{2 + \frac 13}$ if $\sigma$ is sufficiently small.
  As before, the exponent pair $(1/6, 1/6)$ yields that $\norm{g(n_0)}\ll \delta^{-O(1)}N^{-\kappa}$
  for at most $O((\delta^{-O(1)}N^{-\kappa} + N^{-\frac{1}{12}})N)$-many $n_0\sim N$. For $\sigma$ sufficiently
  small, this implies the desired result.

\end{proof}
Proposition~\ref{prop:bracket_poly} will follow from 
Green and Tao's quantification of Leibman's theorem, stated below. We refer the reader to Green and Tao's paper \cite{GT}
for definitions regarding nilmanifolds and filtrations.
\begin{theorem}[{\cite[Theorem 2.9]{GT}}]\label{thm:gt_main}
  
  Suppose that $G/\Gamma$ is an $m$-dimensional nilmanifold, $G_\bullet$ a filtration of degree $d$, and $\mc X$ a $Q$-rational Mal'cev basis.
  Then, for $g\in\mathrm{poly}(\ZZ, G_\bullet), 0 < \delta < \frac 12$,
  we have at least one of the following:
  \begin{enumerate}
  \item
    $(g(n))_{n\le N}$ is $\delta$-equidistributed in $G/\Gamma$.
  \item
    There exists a non-trivial horizontal character $\eta : G\to\RR/\ZZ$ with $|\eta|\ll \delta^{-O(1)}$ such that
    \[
      \norm{\eta\circ g}_{C^\infty[N]}\ll \pfrc{Q}{\delta}^{O(1)}.
    \]
  \end{enumerate}
\end{theorem}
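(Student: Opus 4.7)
The plan is to follow Green and Tao's proof of their quantitative Leibman theorem \cite[Theorem 1.16]{GT} essentially verbatim, while tracking through each step that the dependence of the constants on $\delta$ remains polynomial. Green and Tao state their result with an unspecified function $A(\delta)$ controlling the size of the horizontal character and the smoothness norm bound; the point here is that $A(\delta)$ may be taken to be $\delta^{-O(1)}$, where the implied exponent depends only on the dimension $m$, the degree $d$, and the rationality parameter $Q$ (absorbed into the $(Q/\delta)^{O(1)}$ bound). For our eventual application the filtration degree and nilmanifold are fixed (of bounded complexity), so we may treat $m, d, Q$ as absolute constants.

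The first step is to recall the overall architecture of the Green--Tao argument: it is an induction on the dimension $m$ of the nilmanifold, with the base case $m = 0$ being trivial. The inductive step proceeds by (i) lifting to a factor-of-circle situation via the fibration $G/\Gamma \to G/(\Gamma[G,G])$, and (ii) distinguishing between the case that the horizontal projection fails to equidistribute (handled directly by Weyl / van der Corput on the torus, which gives polynomial dependencies from the classical quantitative Weyl inequality) and the case that it does equidistribute, in which case one performs a vertical frequency decomposition and reduces to a lower-dimensional nilmanifold after a change of coordinates. Each reduction in dimension loses a polynomial factor in $\delta$.

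The second step is to check that each of the three mechanisms above is quantitatively polynomial. For the Weyl-type base case, this is the standard fact that if a polynomial $\alpha_d n^d + \dots + \alpha_1 n$ has non-equidistributed image mod $1$ at resolution $\delta$, then there is $q \ll \delta^{-O_d(1)}$ with $\|q \alpha_j\|_{C^\infty[N]} \ll \delta^{-O_d(1)}$; this is precisely \cite[Proposition 4.3]{GT} and the exponents there are explicitly polynomial. For the vertical frequency step, the invocation of the factorization theorem and the change of coordinates involve only multiplication by integers of size $\delta^{-O(1)}$ and finitely many applications of the Baker--Campbell--Hausdorff formula on the Mal'cev coordinates, producing polynomial factors. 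The induction then composes $m$ polynomial losses, yielding a final exponent depending on $m$ and $d$ but still polynomial in $\delta$.

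The main obstacle will be bookkeeping: one needs to make sure that when a non-trivial horizontal character of size $\delta^{-O(1)}$ is found on the factor nilmanifold and pulled back, the smoothness norm bound $\|\eta\circ g\|_{C^\infty[N]} \ll (Q/\delta)^{O(1)}$ is preserved through the change of Mal'cev basis (which may introduce denominators of size $\delta^{-O(1)}$). This is essentially already done in \cite[\S7--8]{GT}; we only need to observe that each step avoids introducing quasi-polynomial losses of the form $\exp(\log(1/\delta)^C)$. Since we have left $\eta$ inexplicit in Theorem \ref{thm:main}, we do not attempt to optimize the exponent here; it suffices to point to the relevant inequalities in \cite{GT} and verify they are polynomial.
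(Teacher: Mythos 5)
The paper does not prove this theorem at all: it is quoted directly from Green--Tao, where the polynomial dependence is already part of the published statement of \cite[Theorem 1.16]{GT} (the bounds there read $\delta^{-O_{m,d}(1)}$). Your premise that Green and Tao only state the result with an unspecified function $A(\delta)$, so that one must reopen their induction on dimension and track polynomial losses through the Weyl estimate, the vertical frequency decomposition, and the factorization/change-of-basis steps, mischaracterizes the source; that bookkeeping is exactly what \cite{GT} carried out and recorded in their statement. The only discrepancy between the statement here and \cite[Theorem 1.16]{GT} is that the Mal'cev basis is assumed $Q$-rational rather than $\frac{1}{\delta}$-rational, and that is disposed of by a one-line reduction your proposal does not mention: apply \cite[Theorem 1.16]{GT} at level $\delta' = \min(\delta, Q^{-1})$, note that failure of $\delta$-equidistribution implies failure of $\delta'$-equidistribution since $\delta' \le \delta$, and observe $(\delta')^{-O(1)} \le (Q/\delta)^{O(1)}$, which is the displayed bound. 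Your outline of the Green--Tao architecture (induction on dimension, abelianization plus quantitative Weyl, vertical frequencies, pulling the horizontal character back through the Mal'cev coordinate changes) is broadly faithful, so the route would work in principle; but as written it is a plan rather than a proof---each claim of polynomiality is asserted rather than verified---and such a verification would only be necessary if \cite{GT} had non-polynomial bounds, which it does not. The efficient route, and the paper's, is citation plus the trivial $Q$ versus $\delta$ reduction; your programme of re-tracking constants through \cite{GT} would only earn its keep if one wanted explicit numerical exponents, which this paper deliberately avoids, and in that case it would have to be carried out in full rather than sketched.
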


\begin{proof}[Proof of Proposition \ref{prop:bracket_poly}]\footnote{The initial setup here owes much to a related answer by FKR to the
    following MathOverflow question: \url{https://mathoverflow.net/questions/452447/}.}
  Take $H$ the Heisenberg group, and let $\Gamma_H$ consist of elements
  of $H$ with integer entries.
  The nilmanifold we start with is $G/\Gamma$ with 
  $G = \RR\times H$, $\Gamma = \ZZ\times\Gamma_H$.
  Then, the Lie algebra of $G$, $\mf g$, is given by
  \[
    \RR\times\set{
      \begin{pmatrix}
        0 & \RR & \RR \\
        0 & 0   & \RR \\
        0 & 0   &  0
      \end{pmatrix}}.
  \]
  As a Mal'cev basis, take
  \begin{align*}
    X_1 &= \left(1, 
          \begin{pmatrix}
            0 & 0   &  0 \\
            0 & 0   &  0 \\
            0 & 0   &  0
          \end{pmatrix}\right),
          X_2 = \left(0, \begin{pmatrix}
            0 & 1   &  0 \\
            0 & 0   &  0 \\
            0 & 0   &  0
          \end{pmatrix}\right),\\
    X_3 &= \left(0, \begin{pmatrix}
      0 & 0   &  0  \\
      0 & 0   &  1  \\
      0 & 0   &  0
    \end{pmatrix}\right),
          X_4 = \left(0, \begin{pmatrix}
            0 & 0   &  1 \\
            0 & 0   &  0 \\
            0 & 0   &  0
          \end{pmatrix}\right).
  \end{align*}
  Let $\phi(t_1, t_2, t_3, t_4) = \exp(t_1X_1)\exp(t_2 X_2)\exp(t_3X_3)\exp(t_4X_4)$, so that
  \[
    \phi(t_1, t_2, t_3, t_4) = \left(t_1, \begin{pmatrix}
        1 & t_2 & t_2t_3 + t_4 \\
        0 & 1   & t_3 \\
        0 & 0   &  1
      \end{pmatrix}\right).
  \]
  As our polynomial sequence, we shall take $g(n) = \phi(f_3(n), f_1(n), f_2(n), 0)$.
  It can be checked this is a polynomial sequence under
  the filtration $G_\bullet$ (see e.g. \cite[\S6]{GT}) given by
  \begin{equation}
    G_0 = G_1 = G_2 = G_3 = G,\qquad
    G_4 = G_5 = G_6 = [G, G],\qquad
    G_7 = \{e\}.
  \end{equation}

  Now, consider the map $F : G/\Gamma\to\RR/\ZZ$ given by
  \[
    F(\phi(t_1, t_2, t_3, t_4)\Gamma) = t_1 + (t_4 + t_2t_3) - t_2\floor{t_3}
    = t_1 + t_4 + t_2 \{t_3\}.
  \]
  That this is well-defined follows from the well-known map into the fundamental domain of
  $H/\Gamma_H$ consisting of elements of $H$ with entries in $[0, 1)$ given by
  \begin{equation*}
    \begin{pmatrix}
      1 & x & z \\
        & 1 & y \\
        &   & 1
    \end{pmatrix}\mapsto
    \begin{pmatrix}
      1 & \set{x} & \set{z - x\floor{y}} \\
        & 1 & \set{y} \\
        &   & 1
    \end{pmatrix}.
  \end{equation*}
  
  Strictly speaking, $F$ is not globally Lipschitz.
  The following lemma will help remedy this.
  \begin{lemma}
    If $(x_n)_{n\le N} $ is $\varepsilon $-equidistributed in $G/\Gamma $, then $(F(x_n))_{n\le N} $ is
    $O(\sqrt{\varepsilon}) $ equidistributed in $\mathbb{R}/\mathbb{Z} $.
  \end{lemma}
  \begin{proof}
    The third horizontal coordinate induces 
    \[
      \theta_3:G/\Gamma\longrightarrow \mathbb R/\mathbb Z,
      \qquad
      \theta_3(\phi(t_1,t_2,t_3,t_4)\Gamma)=t_3 + \mathbb{Z},
    \]
    which is well-defined.
    We are concerned with jumps at $\Sigma:=\theta_3^{-1}(0)$. Write $m_{G/\Gamma}$ and $m_{\mathbb{R}/\mathbb{Z}}$ for the normalized Haar
    probability measures on $G/\Gamma$ and $\mathbb{R}/\mathbb{Z}$, respectively.
    For $0<\rho<1/10$, put $\Sigma_\rho
    :=\{x\in G/\Gamma:\|\theta_3(x)\| <\rho\}$.
    Since $(\theta_3)_*m_{G/\Gamma}=m_{\mathbb{R}/\mathbb{Z}}$, we have $m_{G/\Gamma}(\Sigma_\rho)=2\rho$.

    Choose Lipschitz $\chi_\rho,\beta_\rho:G/\Gamma\to[0,1]$ such that
    $\chi_\rho=0$ on $\Sigma_\rho$, $\chi_\rho=1$ outside $\Sigma_{2\rho}$, and
    \[
      \mathbf 1_{\Sigma_{2\rho}}\le \beta_\rho
      \le \mathbf 1_{\Sigma_{3\rho}},\qquad
      \|\chi_\rho\|_{\mathrm{Lip}}
      +\|\beta_\rho\|_{\mathrm{Lip}}\ll\rho^{-1}.
    \]
    The equidistribution of $(x_n) $ implies 
    \[
      \frac{1}{N} \sum_{n\le N } \mathbf 1_{\Sigma_{2\rho}}(x_n)
      \le \frac{1}{N}\sum_{n\le N }\beta_\rho(x_n)
      \ll \rho+\frac{\varepsilon}{\rho}.
    \]
    If $\psi:\mathbb R/ \mathbb{Z} \to\mathbb C$ satisfies $\|\psi\|_{\mathrm{Lip}}\le1$, then $\Phi_\rho(x):=\chi_\rho(x)\psi(F(x))$ is
    globally Lipschitz and $\|\Phi_\rho\|_{\mathrm{Lip}}\ll\rho^{-1}$. Consequently,
    \begin{equation}
      \left|
        \mathbb E_{n\le N}\psi(F(x_n))
        -\int_{G/\Gamma}\psi(F(x))\,dm_{G/\Gamma}(x)
      \right|\le
    \left|
      \mathbb E_{n\le N}\Phi_\rho(x_n)
      -\int_{G/\Gamma}\Phi_\rho\,dm_{G/\Gamma}
    \right| + \rho + \frac{\varepsilon}{\rho}
      \ll \rho+\frac{\varepsilon}{\rho}.
    \end{equation}
    Finally, $F_*m_{G/\Gamma}=m_{\mathbb R/\mathbb{Z}}$ because the Haar-uniform $t_1$-coordinate occurs additively in $F$.
    Taking $\rho=\sqrt{\varepsilon}$, we conclude that $(F(x_n))_{n\le N}$ is $O(\sqrt{\varepsilon})$-equidistributed in $\mathbb R/\mathbb{Z}$.
  \end{proof}

  We have $F(g(n)\Gamma)=f_3(n)+f_1(n)\{f_2(n)\}$. The preceding lemma implies that there exists $0 < c \ll 1 $ such that 
  $c\delta^2$-equidistribution of $(g(n)\Gamma)_{n\le N}$ implies $\delta$-equidistribution of $(F(g(n)\Gamma))_{n\le N}$.
  Consequently, under the hypothesis of Proposition~\ref{prop:bracket_poly}, the orbit $(g(n)\Gamma)_{n\le N}$ is not $c\delta^2$-equidistributed
   in $G/\Gamma $.

  Applying Theorem~\ref{thm:gt_main} with parameter $\delta_0 = c\delta^2$, there exists a nontrivial horizontal character
  $\eta:G\to\RR/\ZZ$ such that
  \[
    |\eta|\ll \delta_0^{-O(1)}\ll\delta^{-O(1)},
    \qquad
    \norm{\eta\circ g}_{C^\infty[N]}
    \ll\delta_0^{-O(1)}\ll\delta^{-O(1)}.
  \]
  %

  Take $\pi : G\to (G/\Gamma)_{ab} = G/[G, G]\Gamma$ the natural projection.
  Also, $(G/\Gamma)_{ab}\cong (\RR/\ZZ)^3$, and in coordinates we have
  $\pi(\phi(t_1, t_2, t_3, t_4)) = (t_1, t_2, t_3)$, so there exist $q_1, q_2, q_3\in\ZZ$, not all zero, such that
  $\eta(\phi(t_1, t_2, t_3, t_4)) = q_3 t_1 + q_1t_2 + q_2t_3$. The bound on the modulus of
  $\eta$ implies that $|q_1|, |q_2|, |q_3|\ll\delta^{-O(1)}$. Also, note that
  $\eta\circ g(n) = q_1f_1(n) + q_2f_2(n) + q_3f_3(n)$, so the condition $\norm{\eta\circ g}_{C^\infty[N]}\ll\delta^{-O(1)}$
  implies that $\norm{q_1\alpha_1 + q_2\alpha_2 + q_3\alpha_3}\ll\delta^{-O(1)}/N$, as desired.
\end{proof}

\section{Acknowledgements}
I thank my advisor, Peter Sarnak, for discussions and encouragement.
I would also like to thank Tsz Ho Chan, Ben Green, Joni Ter\"av\"ainen, and Ognian Trifonov
for various comments on earlier versions.

\appendix
\section{Table of symbols}
For the ease of the reader, we include Table~\ref{tab:cq5rbqwg57} (appearing at the end of the paper)
listing a selection of the many symbols used through the paper, with a focus on those that
span multiple sections, as well as a reference to the first equation
or section/subsection in which they are first defined (along with, in some selected cases,
further constraints that are important). The symbols are ordered by their first occurrence.


\setcounter{table}{0} \renewcommand{\thetable}{A}
\begin{table}[h]  \label{tab:cq5rbqwg57}
  \begin{tabular}{c c c}
    Symbol & Definition/salient constraints & First occurrence \\ \hline
    $H$ & $H\asymp X^{\frac{1}{5} - \eta}$ & \S\ref{sec:cq5rbolqen} \\ \hline
    $G$ & $X/H^5\asymp X^{5\eta}$ & \S\ref{sec:cq5rbolqen} \\ \hline
    $U$ & $U = X^{\eta}$ & \S\ref{sec:cq5rbolqen} \\ \hline
    $D_-$ & $H/U$ & \S\ref{subsec:reduction}\\ \hline
    $D_+$ & $2X^{1/2}$ & \S\ref{subsec:reduction} \\ \hline
    $\mathcal{D}_S$ & $\set{d\in S : \text{there exists } m\in \mathbb{Z} \text{ such that } md^2\in [X, X + H]}$ & \eqref{eq:cq5rbpe0xy}\\ \hline
    $D$ & $D_-\le D\le D_+$ & \S\ref{subsec:reduction} \\ \hline
    $\Delta$ & $D/H$ & \S\ref{subsec:reduction}\\  \hline
    $\mathcal{D}$ & $\mathcal{D}_{[D, 2D]}$ & \S\ref{subsec:reduction}\\  \hline
    $\mathcal{D}_a$ & $\set{d : d, d + a\in \mathcal{D}\text{ such that } (d, d + a)\cap \mathcal{D} = \emptyset}$& \eqref{eq:cq5rbpwe1z}\\ \hline
    $A$ & $\Delta^{\frac{4}{3}} (H^4/X)^{\frac{1}{3}}\ll A\ll \Delta U $; $ \Delta (GU)^{-O(1)}\ll A$ & \eqref{eq:pigeonholed}; \S\ref{sec:final_reductions}\\ \hline
    $\Omega$ & $A/\Delta$ & \S\ref{sec:final_reductions} \\ \hline
    $F_a(d)$ & $\dfrac{X}{d^2} - \dfrac{X}{(d + a)^2}$ & \eqref{eq:cq5rbqjd9g}\\ \hline
    $R_a(d)$ & $-(2d - a)\dfrac{X}{d^2} + (2d + 3a)\dfrac{X}{(d + a)^2} $ & \eqref{eq:cq5rbql9yk}\\ \hline
    $R$ & $ \dfrac{XA^3}{\Delta^4H^4}\asymp \dfrac{H}{\Delta} G\Omega^3 $ & \eqref{eq:11}\\\hline 
    $\tilde d_a$ & $R_a^{-1}$ & \S\ref{sec:cq5rbqt63z} \\
           & $\tilde d_a(\rho)\asymp D$ for $\rho \asymp R$ & \\ \hline
    $\mathcal{R}_a$ & \eqref{eq:D_aR_a_size_bound}, \eqref{eq:d_approx} & Proposition~\ref{prop:approx_param}\\ \hline
    $d_a^*(r)$ & \eqref{eq:4}, \eqref{eq:d_approx}& Proposition~\ref{prop:approx_param}\\ \hline
    $r_a^*(d)$ & $\mathbb{Z}\ni r_a^*(d)\asymp R$, $R_a(d) = r_a^*(d) + O(1/\Delta)$ & \eqref{eq:3}\\ \hline
    $W$ & $\asymp GU^5$ & Proposition~\ref{prop:Ra_bound_lower} \\  \hline
    $\ell_1, \ell_2$ & $\ell_1 < \ell_2\ll W$ & \S\ref{sec:cq5rctx73a}\\  \hline
    $b_0, v$ & $d + \ell_1b_0 = d_a^*(r + \ell_1), d + \ell_2 b_0 + v = d_a^*(r + \ell_2)$ & \eqref{eq:10}\\
           & $|b_0| \asymp \Delta^2 (GU)^{O(1)}, v\ll \Delta (GU)^{O(1)}$ & \\ \hline
    $B$ & $\dfrac{D}{R}\asymp \Delta^2 \dfrac{1}{G\Omega^3}$, $|b_0| \asymp B$ & \eqref{eq:1}\\ \hline
    $ S_{a, b}(d)$ & $-(b - a)\dfrac{X}{d^2} + (b + a)\dfrac{X}{(d + a)^2} - (b + a)\dfrac{X}{(d + b)^2} + (b - a)\dfrac{X}{(d + a + b)^2}$ & \eqref{eq:2}\\
           & $\asymp \Delta^2 (GU)^{O(1)}$ & \\ \hline
    $\hat S_{a, b}(d)$ & $S_{a, b}(d) - (R_a(d) - R_a(d + b))$ & \eqref{eq:6}\\\hline
    $F_{a, b}(d)$ & $ F_a(d) - F_a(d + b)$ & \eqref{eq:7}\\
           & $\asymp \dfrac{H}{\Delta}(GU)^{O(1)}$ & \\ \hline
    $\Qoppa$ & $  \ell_1 F_{a, \ell_2b_0 + v}(d) - \ell_2 F_{a, \ell_1b_0}(d)$ & \eqref{eq:9}\\
           & $\ll \biggl( \dfrac{H}{\Delta^3}|v|+ 1 \biggr) (GU)^{O(1)}$ &\\  \hline
    $ \Upsilon$ & $  \ell_2^2(\ell_2 - \ell_1)^2\hat S_{a, b_0\ell_1}(d) - \ell_1^2(\ell_2 - \ell_1)^2\hat S_{a, b_0\ell_2 + v}(d) $ & \eqref{eq:8}\\
           & $ + \ell_1^2\ell_2^2\hat S_{a, b_0(\ell_2 - \ell_1) + v}(d + \ell_1b_0) $ & \\ \hline
    $p_1(v)$ & $3\ell_1^3\ell_2(\ell_2 - \ell_1)b_0v^2 + \ell_1^3(2\ell_2 - \ell_1)v^3$ & \eqref{multline:cq5rr17ch5}\\
    $p_2(v)$ & $-5\ell_1^3\ell_2^2(\ell_2 - \ell_1)^2b_0^3v - 15\ell_1^3\ell_2^2(\ell_2 - \ell_1)b_0^2v^2$ & \eqref{multline:cq5rr2avlb}\\ 
           & $
             - 5\ell_1^3\ell_2(3\ell_2 - 2\ell_1)b_0v^3 - \frac{5}{2}\ell_1^3(2\ell_2 - \ell_1)v^4$ & \\ \hline

    $\Sampi(r)$ & $ \dfrac{X}{\tilde d_a(r)^5}\biggl(-4a + 10\dfrac{a^2}{\tilde d_a(r)}\biggr)\biggl(\tilde p_1(v, r) + \dfrac{\tilde p_2(v, r)}{\tilde d_a(r)}\biggr)$ & \eqref{eq:cq6v987zct}\\ \hline
  \end{tabular}
  \caption{}
\end{table}

\bibliography{refs}{} \bibliographystyle{alpha}

\end{document}